\documentclass[10pt]{article}
\usepackage[utf8]{inputenc}
\usepackage[T1]{fontenc}
\usepackage{babel}
\usepackage{amssymb}
\usepackage{enumerate}
\usepackage{amsthm}
\usepackage{amsmath}
\usepackage{xcolor}
\usepackage{enumitem}
\usepackage{hyperref}
\usepackage{geometry}
\usepackage{graphicx}

\usepackage{bm}
\usepackage{bbm}
\usepackage[capitalize]{cleveref}
\usepackage{thmtools}
\usepackage{cases}
\usepackage{ulem}

\usepackage[title]{appendix}

\hypersetup{
    colorlinks=true,
    linkcolor=[rgb]{0.2, 0, 0.7},
    filecolor=magenta,
    urlcolor=cyan,
    bookmarks=true,
    citecolor=[rgb]{0,0.4,0},
}

\crefformat{equation}{#2\normalfont{{(\textup{#1})}}#3}

\theoremstyle{plain}
\newtheorem{thm}{Theorem}[section]

\newtheorem{lem}[thm]{Lemma}

\theoremstyle{definition}
\newtheorem{defi}[thm]{Definition}

\theoremstyle{remark}
\newtheorem{rem}[thm]{Remark}

\crefname{thm}{Theorem}{Theorems}
\crefname{cor}{Corollary}{Corollaries}
\crefname{lem}{Lemma}{Lemmata}
\crefname{prop}{Proposition}{Propositions}
\crefname{defi}{Definition}{Definitions}
\crefname{rem}{Remark}{Remarks}
\crefname{ex}{Example}{Examples}
\numberwithin{equation}{section}
\title{Markov Chain Approximation of Sticky Diffusions and Hamilton–Jacobi–Bellman Equations on Networks}
\author{Alessio Basti\footnote{Dip. di Ingegneria e Geologia, Univ. ``G. d'Annunzio'' di Chieti-Pescara,
		viale Pindaro 42, 65127 Pescara (Italy), {\textsc alessio.basti@unich.it, fabio.camilli@unich.it.}}\and Jules Berry\footnote{ Université Paris-Saclay, CNRS, Centrale-Supélec, Laboratoire des signaux et systèmes, 91190, Gif-sur- Yvette,
		(France){ \textsc jules.berry@centralesupelec.fr}} \and Fabio Camilli$^*$}
\date{\today}
\graphicspath{./}
\begin{document}
\maketitle

\begin{abstract}
	We propose a discrete Markov-chain approximation of diffusion processes on networks with both Kirchhoff  and sticky
vertex conditions. Stickiness is modeled by a probabilistic residence mechanism at the vertex, while the motion along the edges follows an Euler--Maruyama-type update at the diffusive scale. We prove that the associated time-interpolated chain converges in distribution to the limiting diffusion in the Skorokhod space using the Ethier--Kurtz framework.  Based on this construction, we derive a fully discrete semi-Lagrangian scheme for Hamilton--Jacobi--Bellman equations on networks and establish its convergence using viscosity solution techniques.
\end{abstract}

\textbf{Keywords:}	Sticky diffusions, networks,  Markov chain approximation,
	   Hamilton--Jacobi--Bellman equations,
	semi-Lagrangian schemes.\vskip 4pt

\textbf{AMS subject classification:} 60J60, 49L25 , 49N80, 65C30.
\section{Introduction}
Partial differential equations posed on networks (or metric graphs) have attracted a rapidly growing interest in recent years, motivated by the modeling of systems whose dynamics is constrained to a collection of one-dimensional branches connected through junctions. Typical examples include vehicular traffic and data transmission, but also transport in pipe networks and energy distribution. From the analytical viewpoint, the presence of junctions requires suitable transmission conditions at vertices, and this has motivated a substantial literature for both first- and second-order PDEs on networks, including Hamilton--Jacobi--Bellman (HJB) equations, where well-posedness is
usually established within the viscosity-solution framework (see 
\cite{barles_book}). \par
In parallel, the probabilistic theory of stochastic processes on networks has developed
from the seminal works of Freidlin--Wentzell \cite{fw} and Freidlin--Sheu \cite{fs}, where diffusion processes on graphs naturally arise as limits of classical Markov processes and are characterized by
second-order operators on edges together with vertex conditions.
In particular, in \cite{fw} it is established a general framework for diffusion processes on
graphs via averaging principles, while in \cite{fs} it is derived an
It\^o/Freidlin--Sheu formula on metric graphs and analyzed the role of the local time at vertices.
See also \cite{kps} for a general review.

A key point in the modeling of diffusion-like dynamics on networks is the choice of the
junction condition. The classical Kirchhoff condition can be interpreted
dynamically as a flux balance at the vertex: the outgoing probability fluxes across the
incident edges compensate each other, so that no mass is created or lost at the junction.
In the non-stochastic setting, analogous Kirchhoff-type transmission conditions encode
conservation laws at junctions and ensure a consistent coupling between edges. In recent
years, however, it has become clear that Kirchhoff conditions are often too restrictive for
applications in which the junction induces a delay or a trapping effect. This has
motivated the introduction of more general vertex conditions leading to sticky
behavior: the process has continuous paths but spends a positive amount of Lebesgue time at
the vertex. Sticky vertex conditions naturally arise in network models where junctions act as service
stations or bottlenecks. Typical examples include transport through a hub with a residence
time before rerouting, traffic intersections with waiting effects, and packet networks with
processing delays at routers. In such settings, stickiness provides an effective macroscopic
description of the junction delay while preserving a diffusion-like motion along the edge.
Recent works have developed probabilistic and semigroup approaches to
sticky diffusions on networks; see, e.g.,
\cite{acg,BC2,BC1,bo,bhC,mo,O1,ss,t}.

From the numerical viewpoint, the Euler--Maruyama method is the classical
time-discretization tool to approximate and simulate diffusion processes,
and a rich literature is available on its strong and weak convergence
properties; see, e.g., the monograph of Kloeden--Platen \cite{kp}. 
In stochastic control, Euler--Maruyama-type discretizations have a
particularly natural interpretation: they induce discrete-time controlled
Markov chains whose dynamic programming equations lead to semi-Lagrangian
schemes for the associated HJB equations. This link is well established in
the Euclidean setting, where semi-Lagrangian schemes inherit monotonicity
properties from the underlying controlled Markov chain approximation, and
convergence can be proved by combining consistency, stability, and
comparison principles; see \cite{cf96,ccds}.

The main goal of this work is to extend the classical Markov-chain
approximation approach, inspired by the Euler--Maruyama method, from
Euclidean domains to diffusions on networks with both non-sticky Kirchhoff
and sticky vertex conditions.
We consider a star-shaped network, namely a finite collection of half-lines
glued at a single vertex, which is the canonical local model around a
junction. On each edge, the diffusion is approximated by a nearest-neighbor
random walk at the diffusive scale. More precisely, the spatial mesh is
adapted to the diffusion coefficient on each edge, so that the lattice
points are located at distances \(j\sigma^\iota\sqrt h\)
from the vertex,  where $\sigma^\iota$ is the diffusion coefficient on the edge, $h$ the time step and \(j\in\mathbb N\). This choice is crucial: starting from the first lattice
point on an edge, a downward step lands exactly at the vertex. Hence the
scheme does not overshoot the junction, no truncation of the Euler step is
needed, and no artificial buffer region has to be introduced.

The behavior at the vertex is encoded by a probabilistic residence and
re-emission mechanism. When the chain is at the vertex, it remains there
with a probability depending on the stickiness parameter and is eventually
re-emitted at distance \(\sigma^\iota\sqrt h\) along one of the incident
edges, chosen according to suitable recalibrated weights. In this way, the
scheme reproduces two essential features of the limiting diffusion: the
positive occupation time of the vertex in the sticky case and the correct
redistribution of excursions among the incident edges. In the non-sticky
case, the residence probability degenerates to immediate re-emission, so
that the chain spends only a vanishing amount of physical time at the
junction, consistently with Kirchhoff-type behavior.

For the convergence analysis of the Markov-chain approximation, we adopt
the weak-limit and martingale-problem framework of \cite{ek}. In
particular, tightness and identification of the limit are obtained through
generator-consistency arguments. The edge-adapted construction plays a key
role in this analysis, since the discrete generator admits the appropriate consistency property at the
vertex: uniform consistency in the sticky case and integrated consistency in the
non-sticky case. The overall approach is
therefore naturally aligned with the Markov chain approximation methodology
developed in \cite{kd}, while being tailored to the singular geometry and
transmission conditions of networks. Approximation schemes for sticky diffusions have also been studied in	\cite{ag,bhC}. The space-time Markov-chain approximation developed in
\cite{ag,alv} applies to general finite metric graphs and uses asymmetric
spatial transitions together with state-dependent transition times
	chosen to match the local characteristics of the target diffusion,
	whereas \cite{bhC} considers a sticky random walk on the half-line.
	By comparison, our construction uses a deterministic time step and
	edge-adapted spatial meshes. This choice reproduces
	the edge-dependent diffusion coefficients, ensures that the chain
	reaches the vertex exactly without overshooting or truncation, and
	captures both the sticky residence time and the redistribution of
	excursions among the incident edges.

Building on this discrete framework, our second main contribution is the derivation of a fully discrete
semi-Lagrangian approximation for second-order HJB equations posed on
networks, covering both Kirchhoff and sticky junction conditions. Although
the stochastic approximation is formulated as a Markov-chain scheme, it has
a probabilistic semi-Lagrangian interpretation: the numerical update
follows the local stochastic characteristics of the diffusion, performing
drift-dependent diffusive steps along the edges and implementing the
sticky residence and redistribution mechanism at the vertex.
Semi-Lagrangian schemes for first-order Hamilton--Jacobi equations on
networks have been studied in \cite{cfs,cff}. Here we extend this approach
to second-order HJB equations with diffusion and with both Kirchhoff and
sticky vertex conditions. Moreover, we prove that the fully
discrete solutions converge, as the discretization parameters vanish, to
the unique viscosity solution of the limiting HJB problem on the network.

The paper is organized as follows. In \Cref{sec:prelim} we introduce the
network setting, the relevant function spaces, and the diffusion processes
on \(\Gamma\), including the sticky vertex condition, together with their
generator. In \Cref{sec:euler-Mar} we construct the
Euler--Maruyama-type Markov-chain approximation and prove the weak
convergence of the time-interpolated chain to the limiting diffusion using
the martingale-problem approach. In \Cref{sec:HJB_equation} we derive the
  semi-Lagrangian scheme  for the network
HJB equation, and we establish the  convergence to the viscosity solution
under Kirchhoff and sticky junction conditions.

\section{Networks, Function Spaces and Diffusion Processes}\label{sec:prelim}

Let $\mathcal I=\{1,\dots,N\}$ and let $\{\Gamma^\iota\}_{\iota\in\mathcal I}$ be a family of copies of the half-line $[0,+\infty)$. We define the network $\Gamma$ as the quotient space
\[
\Gamma := \left(\bigsqcup_{\iota\in\mathcal I} \Gamma^\iota\right)\big/\sim,
\]
where the equivalence relation identifies all endpoints, i.e.
\[
(\iota,x)\sim(\kappa,y) \quad \Longleftrightarrow \quad
\begin{cases}
\iota=\kappa,\ x=y,\\
x=y=0.
\end{cases}
\]
The common point is denoted by $O$. Endowed with the distance
\begin{equation}
d\big((\iota,x),(\kappa,y)\big)=
\begin{cases}
|x-y|, & \iota=\kappa,\\
x+y, & \iota\neq\kappa,
\end{cases}
\end{equation}
$\Gamma$ is a locally compact Polish space. 

A function $f:\Gamma\to\mathbb{R}$ is identified with a family
$f=\{f^\iota\}_{\iota\in\mathcal I}$, where $f^\iota(x)=f(\iota,x)$.
We define
\[
C(\Gamma)=
\left\{
f:\ f^\iota\in C([0,+\infty)),\ f^\iota(0)=f^\kappa(0)\ \text{for every $(\iota,\kappa) \in \mathcal{I} \times \mathcal{I}$}
\right\},
\]
\[C_0(\Gamma)
=
\left\{
f\in C(\Gamma):
\lim_{x\to+\infty} f^\iota(x)=0
\quad\text{for every }\iota\in\mathcal I
\right\},
\]
and, for $n\in \mathbb{N}$,
\[
C^n(\Gamma)=
\left\{
f\in C(\Gamma):\ f^\iota\in C^n([0,+\infty))\ \text{for every $\iota \in \mathcal{I}$}
\right\},
\]
where $C^n([0,+\infty))$ denotes the space of functions of class $C^n$ on
$(0,+\infty)$ whose derivatives up to order $n$ extend continuously to
$x=0$; for every $k=1,\ldots,n$, the vertex value
$\partial_x^k f^\iota(0)$ is understood as the corresponding one-sided
limit as $x\downarrow0$..

Throughout the paper, we fix a stickiness parameter $\eta\geq0$ and
positive redistribution weights $(\gamma^\iota)_{\iota\in\mathcal I}$
satisfying
\begin{equation}
	\sum_{\iota\in\mathcal I}\gamma^\iota=1.
\end{equation}
We further assume that, for every $\iota\in\mathcal I$, the following
conditions hold:
\begin{enumerate}
\item[\bf{(H1)}] $b^\iota:[0,+\infty)\to\mathbb R$ is continuous and locally Lipschitz,
with a finite limit $b^\iota(0)$ and $|b^\iota(x)|\le C$ for all $x\ge0$.
\item[\bf{(H2)}] $\sigma^\iota$ is a positive constant on each edge; accordingly we set $\sigma_*:=\min_{\iota\in\mathcal I}\sigma^\iota>0$.
\end{enumerate}
We recall the main definitions and notation for diffusions on
$\Gamma$, following \cite{BC1,fw}.
On each edge $\Gamma^\iota$, $\iota\in \mathcal I$, we consider the second-order differential operator
\[
\mathcal G^\iota f(x)=\frac12(\sigma^\iota)^2 \partial_{xx} f^\iota(x)+b^\iota(x)\partial_x f^\iota(x),
\qquad x>0.
\]
If the limits
$\mathcal G^\iota f(0+)$ coincide for all $\iota\in\mathcal I$, we denote their common value by $\mathcal Gf(O)$ and define
\[
\mathcal Gf(\iota,x)=\mathcal G^\iota f(x),
\qquad x>0,
\]
together with the vertex value $\mathcal Gf(O)$. The sticky diffusion on the network is then characterized by the generator $(\mathcal G,D(\mathcal G))$, where
\[
D(\mathcal G)=
\left\{
f\in C^2(\Gamma)\cap C_0(\Gamma):
\mathcal Gf\in C_0(\Gamma),\quad
\eta \mathcal Gf(O)
=
\sum_{\iota\in\mathcal I}
\gamma^\iota\sigma^\iota \partial_x f^\iota(0)
\right\}.
\]
The parameter $\eta\geq0$ measures the stickiness of the vertex.
When $\eta=0$, the vertex condition reduces to the weighted Kirchhoff condition
\[
\sum_{\iota\in\mathcal I}\gamma^\iota\sigma^\iota \partial_x f^\iota(0)=0,
\]
and the diffusion is non-sticky. When $\eta>0$, the process is sticky: it has continuous paths and
spends a positive amount of Lebesgue time at the vertex $O$. More precisely, if $X(t)=(\iota(t),x(t))$ denotes the diffusion on $\Gamma$, the component
$x(t)$ evolves on each open edge according to the one-dimensional diffusion with drift $b^\iota$ and
variance coefficient $(\sigma^\iota)^2$. At the vertex, the process is delayed by a sticky mechanism and
is then redistributed among the edges according to
\begin{equation}\label{eq:redistribution_prob}
\mathbb P\bigl(\iota(\tau^+)=\iota\mid X_\tau=O\bigr)
=\frac{\gamma^\iota\sigma^\iota}
{\sum_{\kappa\in\mathcal I}\gamma^\kappa\sigma^\kappa},
\qquad \iota\in\mathcal I.
\end{equation}
where \(\tau\) denotes the starting time of the excursion and
\(\iota(\tau^+)\) indicates the edge label immediately after the process
leaves the vertex.

In stochastic differential form, the dynamics is described by the pair
$(X,L)$, where $L=(L(t))_{t\ge0}$ is a continuous, nondecreasing, adapted
process with $L(0)=0$, increasing only on the (random) set
$\{t\ge0:\ X(t)=O\}$, such that
\begin{equation}\label{eq:sticky_SDE}
dx(t)
=
b^{\iota(t)}(x(t))\,\mathbf{1}_{\{X(t)\neq O\}}\,dt
+\sigma^{\iota(t)}\,\mathbf{1}_{\{X(t)\neq O\}}\,dW_t
+\biggl(\sum_{\kappa\in\mathcal I}\gamma^\kappa\sigma^\kappa\biggr)dL(t),
\end{equation}
where $W$ is a one-dimensional Brownian motion, coupled with the
occupation-time (``stickiness'') identity
\begin{equation}\label{eq:occupation_identity}
\int_0^t \mathbf{1}_{\{X(s)=O\}}\,ds=\eta\,L(t),
\qquad t\ge0 .
\end{equation}
The indicator functions in \eqref{eq:sticky_SDE} express
the fact that the standard diffusion dynamics is active only away from the vertex, whereas the
motion at the vertex is governed by the sticky mechanism
\eqref{eq:occupation_identity}. Well-posedness in law of the system
\eqref{eq:sticky_SDE}--\eqref{eq:occupation_identity}, and its equivalence
with the martingale problem for $(\mathcal G,D(\mathcal G))$ follow from
the semigroup construction; see \cite{BC1} and the references therein. The relations \eqref{eq:sticky_SDE}--\eqref{eq:occupation_identity}
do not by themselves determine the edge-label process \(\iota(t)\).
They are therefore complemented by the redistribution rule at the
vertex (see \eqref{eq:redistribution_prob}).

\section{Approximation of the diffusion process on the network: a random-walk scheme}\label{sec:euler-Mar}
In this section, we introduce an edge-adapted Markov-chain
approximation of the diffusion process on a star graph. We first
describe the transition mechanism along the edges and at the vertex,
and then prove the weak convergence of the piecewise-constant
interpolation to the limiting diffusion, treating separately the
sticky case $\eta>0$ and the non-sticky case $\eta=0$.

\subsection{Formulation of the approximation scheme}
For $h>0$, we consider the edge-adapted lattice
\begin{equation*}
	\Gamma_h
	:=
	\{O\}
	\cup
	\bigcup_{\iota\in\mathcal I}
	\left\{
	\bigl(\iota,j\sigma^\iota\sqrt{h}\bigr)
	: j\in\mathbb N,\ j\geq 1
	\right\}
	\subset\Gamma,
\end{equation*}
which constitutes the state space of the approximating chain.
Choose $h_0>0$ such that
\[
\sqrt{h_0}\,\|b\|_\infty\leq\sigma_*,
\]
where $\sigma_*$ is defined in \textbf{(H2)}.
Then, for every $h\in(0,h_0)$, define
\begin{equation}\label{eq:upwind_prob}
	p_\pm^\iota(x)
	:=
	\frac{1}{2}
	\left(
	1\pm\frac{\sqrt{h}}{\sigma^\iota}b^\iota(x)
	\right),
	\qquad
	x\geq0,\quad \iota\in\mathcal I.
\end{equation}
By the choice of $h_0$, these coefficients belong to $[0,1]$ and
therefore define admissible transition probabilities.

Let $(R_n)_{n\in\mathbb N}$, $(I_n)_{n\in\mathbb N}$, and
$(U_n)_{n\in\mathbb N}$ be three mutually independent sequences of
i.i.d.\ random variables with distributions given for each $n\in\mathbb N$ by
\begin{equation}
	\begin{aligned}
		&\mathbb P(R_n=1)
		=\frac{\sqrt h}{\eta+\sqrt h},
		\qquad
		\mathbb P(R_n=0)
		=\frac{\eta}{\eta+\sqrt h},
		\\[4pt]
		&\mathbb P(I_n=\iota)
		=\gamma^\iota,
		\qquad \iota\in\mathcal I,
		\\[4pt]
		&U_n\sim\mathcal U(0,1),
	\end{aligned}
\end{equation}
where $\mathcal U(0,1)$ denotes the uniform distribution on $(0,1)$. 
The approximating chain $X_n^h=(x_n,\iota_n)\in\Gamma_h$ is defined by
\begin{align}
x_{n+1}
&=
\begin{cases}
x_n + \sigma^{\iota_n}\sqrt h\,
\bigl(2\cdot\mathbf 1_{\{U_n\le p_+^{\iota_n}(x_n)\}}-1\bigr),
& x_n>0, \\[4pt]
R_n\,\sigma^{I_n}\sqrt h,
& x_n=0,
\end{cases}
\label{eq:space_fpk}
\\[2pt]
\iota_{n+1}
&=
\begin{cases}
\iota_n,
& x_n>0, \\[4pt]
I_n,
& x_n=0.
\end{cases}
\label{eq:index_fpk}
\end{align}

Here $x=0$ is identified with the vertex $O$. On each open edge, the
chain performs a nearest-neighbor random walk on
$\sigma^\iota\sqrt h\,\mathbb N$, with transition probabilities
\eqref{eq:upwind_prob}. At the vertex, it is held for a geometric
number of steps determined by $\eta$ and then re-emitted at
$\sigma^\iota\sqrt h$ on an edge selected with probability
$\gamma^\iota$. The resulting limiting excursion probabilities are
proportional to $\gamma^\iota\sigma^\iota$, see \eqref{eq:redistribution_prob}.

The chain is locally consistent with the diffusion on each edge: for $x>0$,
\begin{equation}\label{eq:local_consistency}
\mathbb E\bigl[x_{n+1}-x_n\,\big|\,X^h_n=(\iota,x)\bigr]=b^\iota(x)\,h,
\qquad
\operatorname{Var}\bigl(x_{n+1}-x_n\,\big|\,X^h_n=(\iota,x)\bigr)
=(\sigma^\iota)^2h\,\bigl(1+O(h)\bigr),
\end{equation}
in the spirit of the Markov chain approximation method of \cite{kd}. The
crucial structural property, compared with the Euler--Maruyama update, is
that the chain reaches the vertex \emph{exactly}: starting from the first
lattice point $\sigma^\iota\sqrt h$, a downward step lands precisely at $0$.
No buffer region is needed and no truncation of the dynamics ever occurs; as
a consequence, for fixed $\eta>0$, the consistency of the discrete generator holds uniformly over the grid, including the vertex, for sufficiently regular test functions in  $D(\mathcal G)$. In the non-sticky case, only an integrated consistency property is available.

\begin{rem}[Calibration of the vertex mechanism]\label{rem:calibration}
For $b\equiv0$, starting from $\sigma^\iota\sqrt h$, the chain returns to
$O$ before reaching $\rho>0$ with probability
$1-\sigma^\iota\sqrt h/\rho+o(\sqrt h)$. Hence the expected number $N_\rho^h$ of vertex sojourns before leaving the ball of radius $\rho$ satisfies
\[
\mathbb E[N_\rho^h]
=
\frac{\rho}
{\sqrt h\sum_{\kappa\in\mathcal I}
	\gamma^\kappa\sigma^\kappa}
+O(1),
\]
and the probability of exiting through edge $\iota$ is \eqref{eq:redistribution_prob}.
Each sojourn has expected duration $\eta\sqrt h+h$, so the expected
occupation time before exit is
\[
\frac{\eta\rho}{\sum_{\kappa\in\mathcal I}
\gamma^\kappa\sigma^\kappa}+O(\sqrt h).
\]
When $\eta=0$, the chain is immediately re-emitted from the vertex.
\end{rem}

We associate to the Markov chain $(X_n^h)_{n\geq0}$ a continuous-time
process $\widehat X^h=(\widehat X^h(t))_{t\geq0}$ defined by piecewise
constant interpolation:
\begin{equation}\label{eq:interpolation}
\widehat X^h(t)
:=
X^h_{\lfloor t/h\rfloor}
=
\bigl(\iota^h(t),x^h(t)\bigr),
\qquad t\geq0 .
\end{equation}
Since $\Gamma_h\subset\Gamma$, the process $\widehat X^h$ is a piecewise
constant process, with values in $\Gamma$, and
therefore $\widehat X^h\in D([0,+\infty);\Gamma)$. 
On $\Gamma_h$ we define the discrete generator $\mathcal G_h$ by
\begin{align*}
\mathcal G_h\varphi(\iota,x)
&:=
\frac{\mathbb E_{\iota,x}\!\left[\varphi(X_1^h)\right]-\varphi(\iota,x)}{h}\\
&=\begin{cases}
	\displaystyle
	\frac{
		p_+^\iota(x)\,\varphi\!\left(\iota,x+\sigma^\iota\sqrt{h}\right)
		+
		p_-^\iota(x)\,\varphi\!\left(\iota,x-\sigma^\iota\sqrt{h}\right)
		-
		\varphi(\iota,x)
	}{h},
	& x>0,
	\\[2.2em]
	\displaystyle
	\frac{1}{\sqrt{h}\,(\eta+\sqrt{h})}
	\left[
	\sum_{\kappa\in\mathcal I}
	\gamma^\kappa\,
	\varphi\!\left(\kappa,\sigma^\kappa\sqrt{h}\right)
	-
	\varphi(O)
	\right],
	& x=0.
\end{cases}
\end{align*}
for every bounded $\varphi:\Gamma_h\to\mathbb R$. For $\varphi\in D(\mathcal G)$
we simply write $\mathcal G_h\varphi$ for $\mathcal G_h$ applied to the
restriction $\varphi|_{\Gamma_h}$.

\subsection{Convergence of the scheme}

We can now state the convergence result. Its proof relies on the classical
Ethier--Kurtz approach \cite[Chapter 3]{ek} (see also \cite{kp,kd}): compact
containment from a Lyapunov estimate, tightness via Aldous' criterion, and
identification of the limit through the uniform convergence of the discrete
generators and the martingale problem. Uniqueness then yields convergence in
distribution.

\begin{thm}\label{thm:EK_convergence}
Assume $\eta>0$. Let $\widehat X^h$ be the piecewise constant
interpolation \eqref{eq:interpolation} of the Markov chain
\eqref{eq:space_fpk}--\eqref{eq:index_fpk}, and assume that
$X_0^h\to x\in\Gamma$ as $h\to0$. Then, for every $T>0$,
\[
\widehat X^h \Rightarrow X
\qquad\text{in }D([0,T];\Gamma),
\]
where $X$ is the sticky diffusion on $\Gamma$ with generator
$(\mathcal G,D(\mathcal G))$.
\end{thm}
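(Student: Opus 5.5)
We follow the Ethier--Kurtz route of \cite[Chapter 4, Theorem 8.2 and Corollary 8.6]{ek} for Markov chains with time step $h$. The key input is uniform generator consistency: for every $\varphi$ in a core $\mathcal D\subset D(\mathcal G)$, we want
\[
\sup_{y\in\Gamma_h}\bigl|\mathcal G_h\varphi(y)-\mathcal G\varphi(y)\bigr|\longrightarrow0\qquad(h\to0).
\]
For the core we take the functions $\varphi\in D(\mathcal G)$ with $\varphi^\iota\in C^3$, bounded third derivatives, and $\varphi^\iota$ constant outside a compact set. Such a class is dense and invariant enough, and is a core by the semigroup construction of \cite{BC1}; we take this from that reference. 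Given consistency, the processes
\[
\varphi(\widehat X^h(t))-\varphi(\widehat X^h(0))-\int_0^{h\lfloor t/h\rfloor}\mathcal G_h\varphi(\widehat X^h(s))\,ds
\]
are martingales. Any subsequential limit then solves the martingale problem for $(\mathcal G,D(\mathcal G))$ with initial law $\delta_x$, and well-posedness in law (recalled in Section~\ref{sec:prelim}) identifies the limit as $X$.

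\emph{Consistency on edges.} For $x\ge\sigma^\iota\sqrt h$, a Taylor expansion gives
\[
\mathcal G_h\varphi(\iota,x)=\tfrac12(\sigma^\iota)^2\varphi''(x)+b^\iota(x)\varphi'(x)+O\bigl(\sqrt h\,\|\varphi'''\|_\infty\bigr).
\]
The first-order term is exact because of \eqref{eq:local_consistency}: the drift contributes $\tfrac{\sqrt h}{\sigma^\iota}b^\iota\cdot\sigma^\iota\sqrt h\,\varphi'/h$. The point $x=\sigma^\iota\sqrt h$ needs no separate treatment, since the down-step lands exactly at $O$ and $\varphi^\iota$ is $C^3$ up to $0$.

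\emph{Consistency at the vertex.} We expand
\[
\varphi(\kappa,\sigma^\kappa\sqrt h)-\varphi(O)=\sigma^\kappa\sqrt h\,\partial_x\varphi^\kappa(0)+\tfrac12(\sigma^\kappa)^2h\,\partial_{xx}\varphi^\kappa(0)+O(h^{3/2}).
\]
Summing with weights $\gamma^\kappa$ and using the vertex condition $\sum_\kappa\gamma^\kappa\sigma^\kappa\partial_x\varphi^\kappa(0)=\eta\,\mathcal G\varphi(O)$, we get
\[
\mathcal G_h\varphi(O)=\frac{\eta\,\mathcal G\varphi(O)\sqrt h+O(h)}{\sqrt h(\eta+\sqrt h)}=\frac{\eta}{\eta+\sqrt h}\,\mathcal G\varphi(O)+O\bigl(\sqrt h/\eta\bigr)\longrightarrow\mathcal G\varphi(O).
\]
This is where $\eta>0$ is essential. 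For $\eta=0$ the $O(h)$ remainder divided by $h$ does not vanish, and the vertex value is not $\mathcal G\varphi(O)$.

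\emph{Tightness.} Compact containment follows from a Lyapunov function $V(\iota,x)=\sqrt{1+x^2}$, suitably smoothed near $O$. Along edges, $\mathcal G_hV\le C$ because $b$ is bounded and $V''$ is bounded. At the vertex,
\[
\mathcal G_hV(O)\le \frac{C\,h}{\sqrt h(\eta+\sqrt h)}\le C/\eta,
\]
since $V'(0)=0$. Doob's inequality then gives $\mathbb P\bigl(\sup_{t\le T}x^h(t)>R\bigr)\to0$ as $R\to\infty$, uniformly in $h$. For Aldous' criterion, or directly \cite[Thm.~3.9.4]{ek}, it suffices that for each $\varphi$ in the core the martingale decomposition above has a uniformly bounded compensator density $\mathcal G_h\varphi$ (already shown) and bounded quadratic-variation increments. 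The latter holds since each jump of $\varphi(\widehat X^h)$ is $O(\sqrt h)$, with conditional second moment $O(h)$ per step. Because the core separates points and $\Gamma$ is locally compact, \cite[Thm.~3.9.1]{ek} yields relative compactness in $D([0,T];\Gamma)$. Jumps of $\widehat X^h$ are at most $\max_\iota\sigma^\iota\sqrt h$, so limits are continuous.

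The main obstacle is identifying the core. We need test functions that are regular enough for the $O(\sqrt h)$ Taylor errors to be uniform, and rich enough for uniqueness of the martingale problem. We would build them by taking smooth $\varphi^\iota$ whose derivatives at $0$ satisfy both the continuity condition and the sticky gluing condition. Density in $C_0(\Gamma)$ and the range condition would then follow from the resolvent construction in \cite{BC1}. Failing that, we would invoke a uniqueness result for martingale problems posed on such a separating subclass.
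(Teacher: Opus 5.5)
Your proposal is correct and follows essentially the same route as the paper. Both arguments rest on uniform generator consistency on a $C^3_b$ core; your vertex computation, which uses the sticky condition and yields an $O(\sqrt h/\eta)$ error, is the same as the paper's. Both also use a Lyapunov compact-containment bound, continuity of limits from the $O(\sqrt h)$ jump size, and identification via the martingale problem, whose core and well-posedness are taken from \cite{BC1}.

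The only real difference is tightness. You obtain it from \cite[Thm.~3.9.1]{ek} applied to $\varphi(\widehat X^h)$, using the uniform bound on $\mathcal G_h\varphi$. The paper instead verifies Aldous' criterion directly for the radial component, with drift $O(h/\eta)$ at the vertex and $d\le 3\omega_h$ across edge changes. One correction: that theorem needs your test class to be dense in the topology of uniform convergence on compacts, not merely point-separating. This does hold here, since a core of a Feller generator is dense in $C_0(\Gamma)$.
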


We start with
the Lyapunov estimate, whose proof is postponed to Appendix~\ref{app:proof_Euler}.

\begin{lem}\label{lem:lyapunov}
Let $V:\Gamma\to \mathbb R$ be defined by $V(\iota,x)=1+x^2$.
Then there exist $h_0>0$ and $C>0$, independent of $h\in(0,h_0)$, such that
\begin{equation}\label{lyap_est}
\mathcal G_h V(\iota,x)\le C V(\iota,x),
\qquad
(\iota,x)\in\Gamma_h .
\end{equation}
Moreover, if $X_0^h\to x\in\Gamma$, then for every $T>0$ and every
$\varepsilon>0$ there exists a compact set $K\subset\Gamma$ such that
\begin{equation}\label{compact}
\inf_{0<h<h_0}
\mathbb P\left(
\widehat X^h(t)\in K,\ \forall t\in[0,T]
\right)
\ge 1-\varepsilon .
\end{equation}
\end{lem}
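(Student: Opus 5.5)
The plan has two parts: the generator bound \eqref{lyap_est} by direct computation, and then the compact containment \eqref{compact} from a discrete Gronwall argument combined with Doob's maximal inequality.

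\emph{Generator bound.} For $x>0$ on edge $\iota$, write $\delta=\sigma^\iota\sqrt h$. Then
\[
\mathcal G_hV(\iota,x)=\frac{p_+^\iota(x)(x+\delta)^2+p_-^\iota(x)(x-\delta)^2-x^2}{h}
=\frac{2x\delta(p_+^\iota-p_-^\iota)+\delta^2}{h}.
\]
Since $\delta(p_+^\iota-p_-^\iota)=b^\iota(x)h$, this equals $2xb^\iota(x)+(\sigma^\iota)^2$. Using $|b^\iota|\le C$ from \textbf{(H1)} and $2x\le 1+x^2$, we get $\mathcal G_hV\le C+\max_\iota(\sigma^\iota)^2\le C'V$. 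At the vertex,
\[
\mathcal G_hV(O)=\frac{\sum_\kappa\gamma^\kappa(\sigma^\kappa)^2h}{\sqrt h(\eta+\sqrt h)}=\frac{\sqrt h\sum_\kappa\gamma^\kappa(\sigma^\kappa)^2}{\eta+\sqrt h}\le\max_\kappa(\sigma^\kappa)^2.
\]
This is bounded uniformly in $h$ for any $\eta\ge0$, so \eqref{lyap_est} holds with a constant independent of $h$.

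\emph{Compact containment.} Set $Y_n=V(X^h_n)$. By \eqref{lyap_est}, $\mathbb E[Y_{n+1}\mid\mathcal F_n]\le(1+Ch)Y_n$. Hence $M_n:=(1+Ch)^{-n}Y_n$ is a nonnegative supermartingale. For $n\le T/h$ we have $(1+Ch)^n\le e^{CT}$. Doob's maximal inequality for nonnegative supermartingales then gives
\[
\mathbb P\Big(\max_{n\le \lfloor T/h\rfloor}Y_n\ge R\Big)\le \mathbb P\Big(\max_{n\le \lfloor T/h\rfloor}M_n\ge Re^{-CT}\Big)\le\frac{e^{CT}\,\mathbb E[V(X^h_0)]}{R}.
\]
Since $X_0^h\to x$, the values $V(X_0^h)$ are uniformly bounded for $h$ small; I take the initial condition to be deterministic here, and in the random case one uses tightness of the initial laws. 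Choose $R$ large so the right side is at most $\varepsilon$. Let $K=\{(\iota,y):1+y^2\le R\}$, which is a compact ball in $\Gamma$ because the network has finitely many edges. Because $\widehat X^h$ is piecewise constant and takes only the values $X^h_n$ with $n\le\lfloor T/h\rfloor$ on $[0,T]$, this yields \eqref{compact}. If necessary, reduce $h_0$ so that the bound on $V(X_0^h)$ holds on all of $(0,h_0)$, while keeping $\sqrt{h_0}\|b\|_\infty\le\sigma_*$.

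\emph{Main obstacle.} The only delicate point is uniformity at the vertex, where the generator carries the large prefactor $1/(\sqrt h(\eta+\sqrt h))$. The calculation above shows that this factor is compensated exactly: the jump lands at $\sigma^\kappa\sqrt h$, so the increment of $V$ is of order $h$. The edge-adapted lattice therefore makes the Lyapunov bound hold uniformly even when $\eta=0$.
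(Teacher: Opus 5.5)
Your proof is correct and is essentially the paper's argument. It uses the same direct computation, giving $\mathcal G_hV(\iota,x)=2x\,b^\iota(x)+(\sigma^\iota)^2$ on the edges and $\mathcal G_hV(O)=\frac{\sqrt h}{\eta+\sqrt h}\sum_\kappa\gamma^\kappa(\sigma^\kappa)^2\le\|\sigma\|_\infty^2$ at the vertex, followed by the nonnegative supermartingale $(1+Ch)^{-n}V(X_n^h)$ and Doob's maximal inequality on a ball $\{x\le R\}$. The only slip is the intermediate bound, which should read $\mathcal G_hV\le C(1+x^2)+\max_\iota(\sigma^\iota)^2$ rather than $C+\max_\iota(\sigma^\iota)^2$; your extra care about bounding $V(X_0^h)$ uniformly on all of $(0,h_0)$ addresses a point the paper passes over silently.
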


\begin{lem}\label{lem:tightness}
Assume $\eta>0$. The family $\{\widehat X^h\}_{h>0}$ is tight in
$D([0,T];\Gamma)$.
\end{lem}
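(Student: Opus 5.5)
We prove tightness through Aldous' criterion in the form of \cite[Chapter 3, Theorem 8.6 and Remark 8.7]{ek}. It has two parts: compact containment and a modulus condition on stopping times. Compact containment is exactly \eqref{compact} of Lemma~\ref{lem:lyapunov}. It therefore remains to show that for every $\varepsilon>0$,
\[
\lim_{\delta\to0}\limsup_{h\to0}\sup_{\tau}\mathbb P\bigl(d(\widehat X^h(\tau+\delta),\widehat X^h(\tau))>\varepsilon\bigr)=0 .
\]
Here the supremum runs over stopping times $\tau\le T$ of the filtration generated by the chain, taking values in the grid $h\mathbb N$. The case of non-grid stopping times reduces to this one, at a cost of $O(h)$ in time, because the interpolation is piecewise constant.

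To control the increments we use test functions rather than the metric directly. We will use a finite family $\{f_k\}\subset D(\mathcal G)$ with the following three properties. First, it separates points of $\Gamma$, uniformly on compacts. Second, each $\mathcal G_h f_k$ is bounded on $\Gamma_h$ uniformly in $h\in(0,h_0)$. Third, for every $\varepsilon>0$ there is $\varepsilon'>0$ such that, on the compact $K$ given by Lemma~\ref{lem:lyapunov}, $d(y,z)>\varepsilon$ implies $|f_k(y)-f_k(z)|>\varepsilon'$ for some $k$.

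Given such a family, the Dynkin martingale
\[
M^h_n=f(X^h_n)-f(X^h_0)-h\sum_{m<n}\mathcal G_hf(X^h_m)
\]
together with optional stopping yields
\[
\mathbb E\bigl[(f(X^h_{n_\tau+\lfloor\delta/h\rfloor})-f(X^h_{n_\tau}))^2\bigr]\le C(\delta+\delta^2).
\]
The quadratic variation term is controlled because $|f(X^h_{m+1})-f(X^h_m)|^2$ has conditional expectation $O(h)$ on the edges. At the vertex we use the sticky calibration: the jump probability is $\sqrt h/(\eta+\sqrt h)$ and the jump size is $O(\sqrt h)$, so the conditional expectation is $O(h^{3/2}/\eta)$. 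Alternatively, one can apply the same generator bound to $f^2$ in place of $f$. Chebyshev's inequality combined with the separation property then gives the Aldous condition.

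The construction of $f_k$ is the step we expect to be the main obstacle, because the bound on $\mathcal G_hf$ at the vertex requires care.

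\emph{Vertex consistency.} For $f\in C^2(\Gamma)$, a Taylor expansion gives
\[
\mathcal G_hf(O)=\frac{\sqrt h\sum_\kappa\gamma^\kappa\sigma^\kappa\partial_xf^\kappa(0)+O(h)}{\sqrt h(\eta+\sqrt h)} .
\]
This is bounded uniformly in $h$ because $\eta>0$, and no vertex condition is needed for this. In fact, when $f\in D(\mathcal G)$ it converges to $\mathcal Gf(O)$.

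\emph{Edge consistency.} For $x>0$, a second-order Taylor expansion together with \eqref{eq:local_consistency} gives $\mathcal G_hf(\iota,x)=\mathcal G^\iota f(x)+O(\sqrt h\|\partial_x^3f\|_\infty)$, provided $f^\iota\in C^3$ with bounded derivatives up to order three.

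\emph{The family.} We take $f_0$ smooth and compactly supported, depending only on the distance $x$ to $O$, with $f_0(\iota,x)=x$ on $[0,R]$ where $K\subset\{x\le R\}$. Then $\partial_xf_0^\iota(0)=1$ for every $\iota$, so $f_0$ is not in $D(\mathcal G)$. By the vertex computation above, this is harmless: membership in $D(\mathcal G)$ is not needed for boundedness of $\mathcal G_hf_0$. For each $\iota$ we take $f_\iota$ equal to $x$ on edge $\iota$ near $O$, vanishing on the other edges, and cut off smoothly at distance $R$. Together these separate points with the required uniform modulus on $K$: the distance between two points on the same edge is detected by $f_0$, and on different edges by the $f_\iota$.

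Finally, the tightness conclusion of \cite{ek} requires relative compactness of $\{f(\widehat X^h)\}$ for $f$ in a separating algebra. We obtain this from the $f_k$, from compact containment, and from the estimates above.
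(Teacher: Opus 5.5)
Your proof is correct, but it takes a different route from the paper's.

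\textbf{The paper's route.} The paper works directly with the unbounded radial coordinate. It writes $\Delta_n=a_n+\xi_{n+1}$, with $|a_n|\le Ch$ and $\mathbb E[\xi_{n+1}^2\mid\mathcal F_n]\le\|\sigma\|_\infty^2h$. At the vertex $a_n=h\,\overline{\sigma}/(\eta+\sqrt h)$ with $\overline{\sigma}=\sum_\iota\gamma^\iota\sigma^\iota$, and this is where $\eta>0$ is used. It then controls the maximal radial oscillation $\omega_h$ over the window by Doob's $L^2$ inequality. Finally, it uses the path fact that an edge change forces an intermediate vertex visit, which gives $d\le 3\omega_h$.

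\textbf{Your route.} You replace both the maximal inequality and this geometric step with bounded, cut-off test functions. Dynkin martingales give fixed-lag $L^2$ bounds. The edge coordinates alone satisfy $d(y,z)=\sum_\iota|f_\iota(y)-f_\iota(z)|$ on $\{x\le R\}$, so they embed $K$ isometrically into $(\mathbb R^N,\ell^1)$ and $f_0$ is redundant. The sticky input is the same in both proofs: $\sum_\iota\mathcal G_hf_\iota(O)=\overline{\sigma}/(\eta+\sqrt h)$, which is exactly the paper's vertex drift $a_n/h$.

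\textbf{What each buys.} The paper's radial formulation is reused for $\eta=0$ through $x=Y+K$ and the discrete reflection estimate. Your particular $f_\iota$ have $\mathcal G_hf_\iota(O)$ of order $h^{-1/2}$ when $\eta=0$. This could be repaired with edge-linear test functions $c_\iota x$ near $O$ satisfying $\sum_\iota\gamma^\iota\sigma^\iota c_\iota=0$. For these, $\mathcal G_h g(O)=0$ for every $\eta\ge0$, and they still separate points when $N\ge2$.

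\textbf{Two minor points.} First, contrary to your opening claim, none of the $f_k$ lies in $D(\mathcal G)$. This is harmless, because only $\sup_h\|\mathcal G_hf_k\|_\infty<\infty$ is used. Second, your final paragraph about relative compactness of $f(\widehat X^h)$ over a separating algebra is unnecessary. Compact containment together with the Aldous condition for the metric $d$, which you have already verified, is all the criterion requires.
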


Having established tightness, it remains to identify the law of any weak
limit through the martingale problem associated with the sticky generator
$(\mathcal G,D(\mathcal G))$. The key point is the following uniform
consistency estimate, whose proof is postponed to
Appendix~\ref{app:proof_Euler}.

\begin{lem}\label{lem:local-generator-estimates}
Assume $\eta>0$. Let $\varphi\in D(\mathcal G)$ with
$\varphi^\iota\in C_b^3([0,+\infty))$ for every $\iota\in\mathcal I$.
Then there exist $h_0>0$ and $C>0$, depending only on
$\|\varphi\|_{C^3}$, on the coefficients and on $\eta$, such that for all
$h\in(0,h_0)$:
\begin{enumerate}
\item[\rm(i)] for every $(\iota,x)\in\Gamma_h$ with $x>0$,
\[
\left|
\mathcal G_h\varphi(\iota,x)-\mathcal G\varphi(\iota,x)
\right|
\le C\sqrt h \, ;
\]
\item[\rm(ii)] at the vertex,
\[
\left|
\mathcal G_h\varphi(O)-\mathcal G\varphi(O)
\right|
\le \frac{C}{\eta}\,\sqrt h .
\]
\end{enumerate}
In particular,
$\displaystyle\sup_{y\in\Gamma_h}|\mathcal G_h\varphi(y)-\mathcal G\varphi(y)|
\le C\sqrt h\to0$.
\end{lem}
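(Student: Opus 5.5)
The proof is a Taylor expansion, treating interior lattice points and the vertex separately. Throughout, $\varphi^\iota\in C_b^3([0,+\infty))$ and $h<h_0$ with $h_0$ as in the construction, so that $p_\pm^\iota$ are admissible.

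\emph{Step (i): interior points.} Fix $(\iota,x)\in\Gamma_h$ with $x>0$. Then $x\ge\sigma^\iota\sqrt h$, so $x-\sigma^\iota\sqrt h\ge0$ and both neighbours lie in $[0,+\infty)$. This is exactly where the edge-adapted mesh matters: no point leaves the edge. Expand to third order with Lagrange remainder:
\[
\varphi^\iota(x\pm\sigma^\iota\sqrt h)=\varphi^\iota(x)\pm\sigma^\iota\sqrt h\,\partial_x\varphi^\iota(x)+\tfrac12(\sigma^\iota)^2h\,\partial_{xx}\varphi^\iota(x)+R_\pm,
\qquad |R_\pm|\le \tfrac16(\sigma^\iota)^3h^{3/2}\|\partial_x^3\varphi^\iota\|_\infty .
\]
When $x=\sigma^\iota\sqrt h$, the lower neighbour is the vertex. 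The expansion is still valid there, because $\varphi^\iota\in C^3$ up to $0$ and $\varphi^\iota(0)=\varphi(O)$.

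Now use the identities $p_++p_-=1$ and $p_+-p_-=\sqrt h\,b^\iota(x)/\sigma^\iota$. The numerator of $\mathcal G_h\varphi(\iota,x)$ becomes
\[
h\,b^\iota(x)\,\partial_x\varphi^\iota(x)+\tfrac12(\sigma^\iota)^2h\,\partial_{xx}\varphi^\iota(x)+O(h^{3/2}).
\]
Dividing by $h$ gives $|\mathcal G_h\varphi-\mathcal G\varphi|\le C\sqrt h$, with $C$ depending on $\max_\iota(\sigma^\iota)^3\|\varphi\|_{C^3}$. Expanding only to second order would even make the bound independent of $\|b\|_\infty$.

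\emph{Step (ii): the vertex.} Expand each $\varphi(\kappa,\sigma^\kappa\sqrt h)$ about $0$ to second order with a third-order remainder, and use $\sum_\kappa\gamma^\kappa=1$:
\[
\sum_\kappa\gamma^\kappa\varphi(\kappa,\sigma^\kappa\sqrt h)-\varphi(O)=\sqrt h\sum_\kappa\gamma^\kappa\sigma^\kappa\partial_x\varphi^\kappa(0)+\tfrac h2\sum_\kappa\gamma^\kappa(\sigma^\kappa)^2\partial_{xx}\varphi^\kappa(0)+O(h^{3/2}).
\]
The domain condition replaces the first term by $\sqrt h\,\eta\,\mathcal G\varphi(O)$. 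Dividing by $\sqrt h(\eta+\sqrt h)$ gives
\[
\mathcal G_h\varphi(O)=\frac{\eta\,\mathcal G\varphi(O)}{\eta+\sqrt h}+\frac{\sqrt h}{2(\eta+\sqrt h)}\sum_\kappa\gamma^\kappa(\sigma^\kappa)^2\partial_{xx}\varphi^\kappa(0)+O\Big(\frac{h}{\eta+\sqrt h}\Big).
\]
Since $\frac{\eta}{\eta+\sqrt h}-1=-\frac{\sqrt h}{\eta+\sqrt h}$, it follows that
\[
\mathcal G_h\varphi(O)-\mathcal G\varphi(O)=\frac{\sqrt h}{\eta+\sqrt h}\Big(-\mathcal G\varphi(O)+\tfrac12\sum_\kappa\gamma^\kappa(\sigma^\kappa)^2\partial_{xx}\varphi^\kappa(0)\Big)+O(h/\eta).
\]
The bracket is bounded by a constant depending on $\|\varphi\|_{C^2}$ and $\|b\|_\infty$, because $|\mathcal G\varphi(O)|\le C\|\varphi\|_{C^2}$. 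Using $\eta+\sqrt h\ge\eta$ yields the bound $C\sqrt h/\eta$.

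This vertex step is the only delicate point. The $O(\sqrt h)$ leading term must cancel exactly through the sticky domain condition. After that cancellation, the error is controlled only after dividing by $\eta+\sqrt h$, and this is where the factor $1/\eta$ enters and why the argument needs $\eta>0$. When $\eta=0$, the prefactor $\sqrt h/(\eta+\sqrt h)$ equals $1$ and no uniform consistency holds, which is consistent with the paper's remark that only integrated consistency is available in that case.

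\emph{Conclusion.} Combine (i) and (ii), absorbing $1/\eta$ into $C$, since $C$ is allowed to depend on $\eta$. This gives the uniform bound over $\Gamma_h$.
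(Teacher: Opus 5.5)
Your proof is correct and follows the paper's argument essentially step by step. At interior nodes you use a third-order Lagrange expansion, relying on the edge-adapted mesh so the lower neighbour never leaves the edge; at the vertex you use a Taylor expansion in which the sticky domain condition turns the $O(\sqrt h)$ term into $\sqrt h\,\eta\,\mathcal G\varphi(O)$, so after dividing by $\sqrt h(\eta+\sqrt h)$ the error is $\sqrt h/(\eta+\sqrt h)$ times a bounded quantity plus $O(h/\eta)$.

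One small aside: your remark that a second-order expansion would make the interior bound independent of $\|b\|_\infty$ is unnecessary. The third-order bound $\tfrac16(\sigma^\iota)^3\sqrt h\,\|\partial_x^3\varphi^\iota\|_\infty$ already does not involve $b$, because $p_\pm\ge0$ and $p_++p_-=1$.
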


\begin{proof}[Proof of \Cref{thm:EK_convergence}]
Let $\varphi\in D(\mathcal G)$ be such that 
$\varphi^\iota\in C_b^3([0,+\infty))$ for every $\iota\in\mathcal I$.
The class of such functions is a core for $\mathcal G$, equivalently, it is dense in $D(\mathcal G)$ with respect to the graph norm, and it
therefore determines the associated martingale problem; see \cite{BC1}. For the discrete
chain,
\[
M_n^{h,\varphi}
:=
\varphi(X_n^h)-\varphi(X_0^h)
-
h\sum_{k=0}^{n-1}\mathcal G_h\varphi(X_k^h)
\]
is a martingale, and so is its piecewise constant interpolation
\[
\widehat M_t^{h,\varphi}
:=
\varphi(\widehat X_t^h)-\varphi(\widehat X_0^h)
-
\int_0^{\lfloor t/h\rfloor h}
\mathcal G_h\varphi(\widehat X_s^h)\,ds
\]
with respect to the natural filtration of $\widehat X^h$.
By \Cref{lem:tightness}, every sequence $h\downarrow0$ admits a
subsequence, still denoted by $h$, such that
\[
\widehat X^h\Rightarrow X
\qquad\text{in }D([0,T];\Gamma).
\]
Moreover, every jump of $\widehat X^h$ has size bounded by
\[
\sup_{0\le t\le T}
d\bigl(\widehat X^h(t),\widehat X^h(t-)\bigr)
\leq
\|\sigma\|_\infty\sqrt h.
\]
Indeed, an interior transition has radial size
$\sigma^\iota\sqrt h$, while a transition from the vertex either leaves
the process at $O$ or moves it to the first grid point of one of the
edges. Consequently,
\[
\sup_{0\le t\le T}
d\bigl(\widehat X^h(t),\widehat X^h(t-)\bigr)
\longrightarrow0
\]
deterministically as $h\to0$. Hence the tight family
$\{\widehat X^h\}_{h>0}$ is in fact $C$-tight, and every weak limit has
continuous paths. In particular,
\[
\mathbb P\bigl(X\in C([0,T];\Gamma)\bigr)=1.
\]

We now prove that every such limit solves the martingale problem for
$(\mathcal G,D(\mathcal G))$. Fix $0\leq s<t\leq T$ and let $F$ be a
bounded continuous functional on $D([0,s];\Gamma)$. Since
$\widehat M^{h,\varphi}$ is a martingale,
\begin{equation}\label{eq:discrete_mg_identity}
	\mathbb E\left[
	F\bigl(\widehat X^h_{\cdot\wedge s}\bigr)
	\left(
	\widehat M_t^{h,\varphi}
	-
	\widehat M_s^{h,\varphi}
	\right)
	\right]=0.
\end{equation}
By \Cref{lem:local-generator-estimates},
\[
\begin{split}
	\left|
	\int_s^t
	\mathcal G_h\varphi(\widehat X_u^h)\,du
	-
	\int_s^t
	\mathcal G\varphi(\widehat X_u^h)\,du
	\right|
	&\leq
	T\sup_{y\in\Gamma_h}
	\left|
	\mathcal G_h\varphi(y)-\mathcal G\varphi(y)
	\right| \\
	&\leq C T\sqrt h
	\longrightarrow0.
\end{split}
\]
The errors caused by replacing
$\lfloor t/h\rfloor h$ and $\lfloor s/h\rfloor h$ by $t$ and $s$ are
bounded by $2h\|\mathcal G_h\varphi\|_\infty$
which tends to zero because the uniform generator consistency implies
that
\[
\sup_{0<h<h_0}\|\mathcal G_h\varphi\|_\infty<+\infty.
\]

Since every weak limit has continuous paths, convergence in the
Skorokhod $J_1$ topology to $X$ implies locally uniform convergence in
time along any Skorokhod representation. As $\varphi$ and
$\mathcal G\varphi$ are bounded and continuous on $\Gamma$, the
continuous mapping theorem therefore yields
\[
	 \varphi(\widehat X_t^h)-\varphi(\widehat X_s^h)
	-\int_s^t\mathcal G\varphi(\widehat X_u^h)\,du\Rightarrow
	\varphi(X_t)-\varphi(X_s)
	-\int_s^t\mathcal G\varphi(X_u)\,du,
\]
jointly with
\[
F\bigl(\widehat X^h_{\cdot\wedge s}\bigr)
\Rightarrow
F\bigl(X_{\cdot\wedge s}\bigr).
\]

All the random variables involved are uniformly bounded. Therefore,
passing to the limit in \eqref{eq:discrete_mg_identity} gives, for every
$0\leq s<t\leq T$,
\[
\mathbb E\left[
F(X_{\cdot\wedge s})
\left(
M_t^\varphi-M_s^\varphi
\right)
\right]=0,
\]
where
\[
M_t^\varphi
:=
\varphi(X_t)-\varphi(X_0)
-
\int_0^t\mathcal G\varphi(X_u)\,du.
\]
Since the identity holds for every bounded continuous functional $F$
depending only on the path up to time $s$, a standard monotone-class
argument shows that $M^\varphi$ is a martingale with respect to the
natural filtration of $X$ and every weak limit of $\widehat X^h$ solves
the martingale problem for $(\mathcal G,D(\mathcal G))$. By the well-posedness of the martingale problem for
$(\mathcal G,D(\mathcal G))$, established in \cite{BC1}, every weak
limit has the same law. The standard Ethier--Kurtz convergence theorem
then implies convergence of the whole family.
\end{proof}
\begin{rem}[Extension to graphs with finite edges]
	\label{rem:finite_edges}
	The restriction to an infinite star graph allows us to focus on the
	local behavior at the vertex without introducing additional
	mesh-compatibility conditions. The edge-adapted construction extends
	directly to a finite metric graph when the intrinsic edge lengths are
	commensurable. More precisely, let $e_i$ be an edge of length $L_i>0$ with constant
	diffusion coefficient $\sigma_i>0$. In order that the lattice on
	$e_i$ reach both endpoints exactly, there must exist
	$J_i\in\mathbb N\setminus\{0\}$ such that $	J_i\sigma_i\sqrt h=L_i$.
	Hence, a common time step $h$ can be used on all the edges only if
	the intrinsic lengths $ \ell_i:=L_i/\sigma_i$
	are commensurable. Under this assumption, one can choose a
	sequence of compatible time steps converging to zero, and the
	construction and the local consistency arguments developed above
	apply at every vertex.
\end{rem}

\begin{rem}[Space-dependent diffusion coefficients]
	For uniformly positive and sufficiently regular space-dependent
	diffusion coefficients, the construction may be formulated in the
	intrinsic coordinates
	\[
	s_\iota(x)
	:=
	\int_0^x\frac{dr}{\sigma_\iota(r)}.
	\]
	The diffusion coefficient then becomes constant, while the drift is
	replaced by
	\[
	\beta_\iota(y)
	=
	\frac{b_\iota(s_\iota^{-1}(y))}
	{\sigma_\iota(s_\iota^{-1}(y))}
	-
	\frac12
	\sigma_\iota'(s_\iota^{-1}(y)).
	\]
	The chain can therefore be constructed on the uniform lattice
	$\sqrt h\,\mathbb N$, retaining the exact hitting of the vertex.
	Extending the convergence results to this setting would require
	additional estimates and is not pursued here.
\end{rem}


\subsection{The non-sticky case  $\eta=0$}

We briefly discuss the approximation scheme in the non-sticky case
$\eta=0$. In this case, the vertex condition in the domain of the
generator reduces to the weighted Kirchhoff condition
\begin{equation*}
	\sum_{\iota\in\mathcal I}
	\gamma^\iota \sigma^\iota
	\partial_x f^\iota(0)=0.	
\end{equation*}
The limiting process is therefore the non-sticky Walsh-type diffusion
on $\Gamma$: it has continuous paths, spends zero Lebesgue time at the
vertex, and, whenever a new excursion starts from $O$, the outgoing
edge is selected according to the redistribution probabilities given by \eqref{eq:redistribution_prob}.
The edge-adapted lattice and the transition mechanism on the open
edges are unchanged. The only difference concerns the vertex: when
$\eta=0$, one has $R_n=1$ almost surely, so that the chain is
immediately re-emitted from $O$. More precisely, if $X_n^h=O$, then
\begin{equation}
	X_{n+1}^h
	=
	\bigl(I_n,\sigma^{I_n}\sqrt{h}\bigr),
	\qquad
	\mathbb{P}(I_n=\iota)=\gamma^\iota.
	\label{eq:nonsticky-vertex-transition}
\end{equation}
Thus, no geometric holding mechanism is present at the vertex, and
the occupation time associated with the discrete visits to $O$ will be shown to
vanish as $h\to0$.

The discrete generator is unchanged at the interior grid points,
whereas at the vertex it takes the form
\begin{equation}
	\mathcal G_h\phi(O)
	=
	\frac{1}{h}
	\sum_{\iota\in\mathcal I}
	\gamma^\iota
	\left[
	\phi^\iota(\sigma^\iota\sqrt{h})-\phi(O)
	\right].
	\label{eq:nonsticky-discrete-generator}
\end{equation}

The convergence proof for $\eta=0$ differs from the sticky case only
in the treatment of the vertex. More precisely, the following two
points require a separate argument:
\begin{itemize}
	\item
	Tightness. The proof of Lemma~\ref{lem:tightness} for
	$\eta>0$ relies on the estimate
	\[
	\left|
	\mathbb{E}
	\left[
	x_{n+1}-x_n\mid X_n^h=O
	\right]
	\right|
	\leq Ch,
	\]
	where the constant depends on $1/\eta$. When $\eta=0$, the chain
	is immediately re-emitted from the vertex and the corresponding
	increment is of order $\sqrt{h}$. Therefore, tightness requires a
	separate control of the number of visits to the vertex, or,
	equivalently, of the discrete occupation time at $O$.
	
	\item
	Identification of the limit. The uniform consistency
	estimate of Lemma~\ref{lem:local-generator-estimates} does not extend
	directly to the vertex. Indeed,  expanding
	\eqref{eq:nonsticky-discrete-generator} gives
	\begin{equation}
		\begin{split}
		\mathcal G_h\phi(O)
		&=
		\frac{1}{\sqrt{h}}
		\sum_{\iota\in\mathcal I}
		\gamma^\iota\sigma^\iota
		\partial_x\phi^\iota(0)
		+
		\frac12
		\sum_{\iota\in\mathcal I}
		\gamma^\iota(\sigma^\iota)^2
		\partial_{xx}\phi^\iota(0)
		+O(\sqrt{h}).
	\end{split}
	\end{equation}
	Although the singular term in
	$\mathcal G_h\phi(O)$ vanishes by the Kirchhoff condition, the remaining
	term does not, in general, converge pointwise to $\mathcal G\phi(O)$.
	The identification argument must therefore use the vanishing of
	the occupation time at the vertex, showing that the discrepancy
	between $\mathcal G_h\phi(O)$ and $\mathcal G\phi(O)$ gives no contribution in the
	limiting martingale problem.
\end{itemize}

The tightness issue discussed above is addressed in the following lemma.
\begin{lem}
	\label{lem:tightness-nonsticky}
	Assume $\eta=0$ and let $\widehat X^h$ be the piecewise-constant
	interpolation of the Markov chain defined by
	\eqref{eq:nonsticky-vertex-transition}. If $X_0^h\to x\in\Gamma$,
	then, for every $T>0$, the family $\{\widehat X^h\}_{h>0}$ 
	is tight in $D([0,T];\Gamma)$.
\end{lem}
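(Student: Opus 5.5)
We will follow the same scheme as for \Cref{lem:tightness}: compact containment plus Aldous' criterion. Compact containment is already available. The Lyapunov estimate of \Cref{lem:lyapunov} does not depend on $\eta$ at interior points. At the vertex with $\eta=0$ one has $\mathcal G_hV(O)=h^{-1}\sum_\iota\gamma^\iota(\sigma^\iota)^2h\le C$, so \eqref{lyap_est} and hence \eqref{compact} hold verbatim.

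For Aldous' criterion we fix a stopping time $\tau\le T$ of the interpolated chain and $\delta\le\delta_0$. We must show that $\mathbb P(d(\widehat X^h(\tau+\delta),\widehat X^h(\tau))>\varepsilon)$ is small, uniformly in $h$ and $\tau$. The natural device is the radial process $x_n=d(X^h_n,O)$, which is a one-dimensional chain on a reflecting lattice. We will use the Doob decomposition $x_n=x_0+A_n+M_n$, with
\[
A_n=\sum_{k<n}\bigl(b^{\iota_k}(x_k)h\,\mathbf 1_{\{x_k>0\}}+\bar\sigma\sqrt h\,\mathbf 1_{\{x_k=0\}}\bigr),\qquad \bar\sigma:=\sum_\kappa\gamma^\kappa\sigma^\kappa .
\]
The conditional variance of the increments of $M$ is bounded by $Ch$. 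Let $\ell^h_n:=\sqrt h\,\#\{k<n:x_k=0\}$ denote the discrete local time. Between $\tau$ and $\tau+\delta$, the martingale part has second moment at most $C\delta$ by Doob, and the drift part is bounded by $C\delta+\bar\sigma(\ell^h_{\tau+\delta}-\ell^h_\tau)$.

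The key step, which we expect to be the main obstacle, is a uniform bound $\mathbb E[\ell^h_{\tau+\delta}-\ell^h_\tau]\le C\sqrt\delta$. We plan to obtain it from the identity above with $x\ge0$ (a Tanaka/Skorokhod-reflection argument). Taking expectations of the increment between $\tau$ and $\tau+\delta$ gives
\[
\bar\sigma\,\mathbb E[\ell^h_{\tau+\delta}-\ell^h_\tau]=\mathbb E[x_{\tau+\delta}-x_\tau]-\mathbb E[\text{interior drift}]\le \mathbb E|x_{\tau+\delta}-x_\tau|+C\delta .
\]
This bound is circular, so we will instead use the Skorokhod reflection representation. Since $\ell$ increases only when $x=0$, one has $\bar\sigma(\ell_{m}-\ell_\tau)\le\sup_{\tau\le k\le m}\bigl(-(Y_k-Y_\tau)\bigr)^+ + \bar\sigma\sqrt h$, where $Y$ is the unreflected part. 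Combining this with Doob's maximal inequality and the bounded drift yields $C(\sqrt\delta+\delta+\sqrt h)$. An alternative route is the test function $\varphi(\iota,x)=x$ for $x\ge0$ in the martingale identity, which gives the same estimate. Once this is available, the total radial displacement satisfies $\mathbb E|x_{\tau+\delta}-x_\tau|\le C(\sqrt\delta+\sqrt h)$ by Markov's inequality.

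It remains to pass from the radial coordinate to the distance $d$ on $\Gamma$, since the edge label may change. If no vertex visit occurs in $[\tau,\tau+\delta]$, then $d$ equals the radial displacement. Otherwise $d(\widehat X(\tau+\delta),\widehat X(\tau))\le x_\tau+x_{\tau+\delta}\le 2\sup_{[\tau,\tau+\delta]}|x-x_\tau|+2\cdot 0$, because $x$ vanishes at some intermediate time. Hence, in all cases, $d\le 2\sup_{u\in[\tau,\tau+\delta]}|x(u)-x(\tau)|$. This supremum is controlled by the same maximal bounds, namely $C(\sqrt\delta+\sqrt h)$ in $L^1$. Letting $\delta\to0$ and then restricting to $h\le h(\delta)$ gives Aldous' condition, with small $h$ handled trivially because the jumps are at most $\|\sigma\|_\infty\sqrt h$. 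Together with compact containment, this yields tightness in $D([0,T];\Gamma)$.
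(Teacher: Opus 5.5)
Your proposal is correct and follows essentially the paper's proof: the paper also writes $x_n=Y_n^h+K_n^h$ with $K^h=\overline{\sigma}\ell^h$, bounds the vertex term by the discrete reflection estimate $K_{n+m}^h-K_n^h\le 2\Omega_{n,m}^h+\overline{\sigma}\sqrt h$ (your one-sided last-visit bound is a slightly sharper form of it), controls $Y^h$ by the bounded drift and Doob's inequality, and then passes from the radial oscillation to the network distance. There are two harmless slips: in the last step the correct constant is $3$ rather than $2$ (a path going from $x_\tau=a$ through $O$ to $2a$ on another edge has $d=3a$ while $\sup|x-x_\tau|=a$), and the suggested alternative with $\varphi(\iota,x)=x$ is just the circular Doob identity again, so the reflection bound is the step that actually does the work.
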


\begin{proof}
	The compact containment condition follows from
	Lemma~\ref{lem:lyapunov}, whose proof remains unchanged when
	$\eta=0$. We therefore only need to verify Aldous' modulus
	condition.	Let
	\[
	\overline{\sigma}
	:=
	\sum_{\iota\in\mathcal I}\gamma^\iota\sigma^\iota
	\]
	and set $\Delta_n:=x_{n+1}-x_n.$	If $x_n>0$, the local consistency identities give
	\[
	\mathbb E[\Delta_n\mid\mathcal F_n]
	=
	b^{\iota_n}(x_n)h.
	\]
	If $x_n=0$, the chain is immediately re-emitted from the vertex and
	hence
	\[
	\Delta_n=\sigma^{I_n}\sqrt h,
	\qquad
	\mathbb E[\Delta_n\mid\mathcal F_n]
	=
	\overline{\sigma}\sqrt h.
	\]
	Consequently,
	\begin{equation}
		\mathbb E[\Delta_n\mid\mathcal F_n]
		=
		b^{\iota_n}(x_n)h\,\mathbf 1_{\{x_n>0\}}
		+
		\overline{\sigma}\sqrt h\,
		\mathbf 1_{\{x_n=0\}}.		
	\end{equation}
	Define
	\[
	a_n
	:=
	b^{\iota_n}(x_n)h\,\mathbf 1_{\{x_n>0\}},\qquad \xi_{n+1}
	:=
	\Delta_n-a_n
	-\overline{\sigma}\sqrt h\,
	\mathbf 1_{\{x_n=0\}}.
	\]
	Then $\mathbb E[\xi_{n+1}\mid\mathcal F_n]=0$ and
	\begin{equation}
		\mathbb E[\xi_{n+1}^2\mid\mathcal F_n]
		\leq Ch,
		\label{eq:nonsticky-martingale-variance}
	\end{equation}
	with a constant independent of $n$ and $h$. Indeed, for $x_n>0$
	this is the same variance estimate used in the proof of
	Lemma~\ref{lem:tightness}, see \eqref{eq:stima_2_moment}, whereas, for $x_n=0$,
	\[
	\begin{split}
		\mathbb E[\xi_{n+1}^2\mid\mathcal F_n]
		&=
		h\sum_{\iota\in\mathcal I}
		\gamma^\iota
		\bigl(\sigma^\iota-\overline{\sigma}\bigr)^2
		\leq Ch.
	\end{split}
	\]
	Introduce the processes
	\[
	A_n^h:=\sum_{k=0}^{n-1}a_k,
	\qquad
	M_n^h:=\sum_{k=0}^{n-1}\xi_{k+1},\qquad K_n^h
	:=
	\overline{\sigma}\sqrt h
	\sum_{k=0}^{n-1}\mathbf 1_{\{x_k=0\}}.
	\]
	Then \(M^h\) is a martingale, \(K^h\) is nondecreasing, and
	\begin{equation}
		x_n=x_0+A_n^h+M_n^h+K_n^h.
		\label{eq:nonsticky-semimartingale-decomposition}
	\end{equation}
	Set $Y_n^h:=x_0+A_n^h+M_n^h$, so that $x_n=Y_n^h+K_n^h$. 	We first establish a discrete reflection estimate. For integers
	$n\geq0$ and $m\geq0$, set
	\[
	\Omega_{n,m}^h
	:=
	\max_{0\leq j\leq m}
	|Y_{n+j}^h-Y_n^h|.
	\]
	Then
	\begin{equation}
		K_{n+m}^h-K_n^h
		\leq
		2\Omega_{n,m}^h
		+\overline{\sigma}\sqrt h.
		\label{eq:nonsticky-discrete-reflection}
	\end{equation}
	Indeed, if the chain does not visit the vertex at any index
	$k\in\{n,\ldots,n+m-1\}$, then the left-hand side vanishes. Otherwise,
	let $r$ and $\ell$ be respectively the first and the last indices in
	this set for which $x_k=0$. Since $K^h$ increases only at such
	indices,
	\[
	K_{n+m}^h-K_n^h
	=
	K_\ell^h-K_r^h+\overline{\sigma}\sqrt h.
	\]
	At every vertex visit one has $x_k=0$ and hence, by
	\eqref{eq:nonsticky-semimartingale-decomposition}, $Y_k^h=-K_k^h$.
	It follows that
	\[
	\begin{split}
		K_\ell^h-K_r^h
		=
		|Y_\ell^h-Y_r^h|
	\leq
		|Y_\ell^h-Y_n^h|
		+
		|Y_r^h-Y_n^h|
		\leq 2\Omega_{n,m}^h,
	\end{split}
	\]
	which proves \eqref{eq:nonsticky-discrete-reflection}.
	
	Let now $(\tau_h)_h$ be a family of stopping times bounded by $T$,
	and let $\theta_h\downarrow0$. Set
	\[
	\nu_h:=\lfloor\tau_h/h\rfloor,
	\qquad
	N_h:=\lfloor\theta_h/h\rfloor+1.
	\]
	Since $\widehat X^h$ is piecewise constant, there exists a random
	integer $m_h\in\{0,\ldots,N_h\}$ such that
	\[
	\widehat X^h(\tau_h+\theta_h)=X_{\nu_h+m_h}^h,
	\qquad
	\widehat X^h(\tau_h)=X_{\nu_h}^h.
	\]
	For the finite-variation part, we have
	\begin{equation}
		\max_{0\leq m\leq N_h}
		|A_{\nu_h+m}^h-A_{\nu_h}^h|
		\leq
		\|b\|_\infty N_hh.
		\label{eq:nonsticky-drift-bound}
	\end{equation}
	For the martingale part, the shifted process
	$ \bigl(
	M_{\nu_h+m}^h-M_{\nu_h}^h
	\bigr)_{m\geq0}$
	is a martingale. Therefore, Doob's inequality and
	\eqref{eq:nonsticky-martingale-variance} give
	\begin{equation}
		\begin{split}
			&\mathbb E\left[
			\max_{0\leq m\leq N_h}
			|M_{\nu_h+m}^h-M_{\nu_h}^h|^2
			\right]
		\leq
			4\,
			\mathbb E\left[
			\sum_{k=0}^{N_h-1}
			\mathbb E\left[
			\xi_{\nu_h+k+1}^2
			\mid\mathcal F_{\nu_h+k}
			\right]
			\right]
			\leq CN_hh.
			\label{eq:nonsticky-martingale-bound}
		\end{split}
	\end{equation}
	It follows from
	\eqref{eq:nonsticky-drift-bound} and
	\eqref{eq:nonsticky-martingale-bound} that
	\begin{equation}
		\begin{split}
			\mathbb E\left[
			\max_{0\leq m\leq N_h}
			|Y_{\nu_h+m}^h-Y_{\nu_h}^h|^2
			\right]
			\leq
			C\bigl((N_hh)^2+N_hh\bigr).
			\label{eq:nonsticky-Y-bound}
		\end{split}
	\end{equation}
	By the decomposition $x^h=Y^h+K^h$ and the discrete reflection
	estimate \eqref{eq:nonsticky-discrete-reflection},
	\[
	\begin{split}
		\max_{0\leq m\leq N_h}
		|x_{\nu_h+m}-x_{\nu_h}|
		&\leq
		\max_{0\leq m\leq N_h}
		|Y_{\nu_h+m}^h-Y_{\nu_h}^h|
		+
		K_{\nu_h+N_h}^h-K_{\nu_h}^h
		\\
		&\leq
		3
		\max_{0\leq m\leq N_h}
		|Y_{\nu_h+m}^h-Y_{\nu_h}^h|
		+
		\overline{\sigma}\sqrt h.
	\end{split}
	\]
	Thus, by \eqref{eq:nonsticky-Y-bound},
	\begin{equation}
		\begin{split}
			&\mathbb E\left[
			\max_{0\leq m\leq N_h}
			|x_{\nu_h+m}-x_{\nu_h}|^2
			\right]
			\leq
			C\bigl((N_hh)^2+N_hh+h \bigr).
			\label{eq:nonsticky-radial-oscillation}
		\end{split}
	\end{equation}
Since $N_hh\leq\theta_h+2h$, 	the right-hand side of
	\eqref{eq:nonsticky-radial-oscillation} converges to zero.
	
	Finally, the argument relating the network distance to the radial
	oscillation is the same as in the proof of
	Lemma~\ref{lem:tightness}. Namely, if the chain does not change edge,
	the network distance equals the radial distance. If it changes edge,
	it must visit the vertex at an intermediate step, and therefore
	\[
	d\bigl(
	X_{\nu_h+m}^h,
	X_{\nu_h}^h
	\bigr)
	\leq
	3
	\max_{0\leq j\leq m}
	|x_{\nu_h+j}-x_{\nu_h}|.
	\]
	Consequently,
	\[
	\begin{split}
		&\mathbb E\left[
		d\bigl(
		\widehat X^h(\tau_h+\theta_h),
		\widehat X^h(\tau_h)
		\bigr)^2
		\right]
		\leq
		C\bigl(
		(\theta_h+h)^2+\theta_h+h
		\bigr)
		\longrightarrow0.
	\end{split}
	\]
	Hence
	\[
	d\bigl(
	\widehat X^h(\tau_h+\theta_h),
	\widehat X^h(\tau_h)
	\bigr)
	\longrightarrow0
	\]
	in probability.	Moreover, as in the sticky case, every jump of $\widehat X^h$ is
	bounded by $\|\sigma\|_\infty\sqrt h$, and therefore the maximal jump
	size converges uniformly to zero. Together with compact containment,
	this proves Aldous' criterion \cite[Theorem 8.6-(c) p.138]{ek} and hence the tightness of
	$\{\widehat X^h\}_{h>0}$ in $D([0,T];\Gamma)$.
\end{proof}
We now identify the limit by proving an integrated consistency
property, using the fact that the discrete occupation time of the
vertex vanishes as $h\to0$.

\begin{thm}
	
Assume $\eta=0$. Let $\widehat X^h$ be the piecewise constant
interpolation \eqref{eq:interpolation} of the Markov chain
\eqref{eq:space_fpk}--\eqref{eq:index_fpk}, and assume that
$X_0^h\to x\in\Gamma$ as $h\to0$. Then, for every $T>0$,
\[
\widehat X^h \Rightarrow X
\qquad\text{in }D([0,T];\Gamma),
\]
where $X$ is the non-sticky diffusion on $\Gamma$ with generator
$(\mathcal G,D(\mathcal G))$.
\end{thm}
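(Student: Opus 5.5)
We follow the scheme of the proof of \Cref{thm:EK_convergence}, with tightness now supplied by Lemma~\ref{lem:tightness-nonsticky}. Every weak limit has continuous paths, because the jumps of $\widehat X^h$ are bounded by $\|\sigma\|_\infty\sqrt h$. It therefore suffices to show that any subsequential limit $X$ solves the martingale problem for $(\mathcal G,D(\mathcal G))$ with $\eta=0$. Uniqueness of that martingale problem \cite{BC1} then gives convergence of the whole family.

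Fix $\varphi\in D(\mathcal G)$ with $\varphi^\iota\in C^3_b$; such functions form a core. The discrete martingale $\widehat M^{h,\varphi}$ is defined as before. At interior points, Lemma~\ref{lem:local-generator-estimates}(i) still gives $|\mathcal G_h\varphi-\mathcal G\varphi|\le C\sqrt h$, since its proof does not involve $\eta$. At the vertex, the Kirchhoff condition removes the singular term in the expansion of $\mathcal G_h\varphi(O)$. Hence $|\mathcal G_h\varphi(O)|\le C$ uniformly in $h$, and likewise $|\mathcal G\varphi(O)|\le C$. Consequently
\[
\Bigl|\int_s^t\bigl(\mathcal G_h\varphi-\mathcal G\varphi\bigr)(\widehat X^h_u)\,du\Bigr|
\le C T\sqrt h + 2C\,h\sum_{k=0}^{\lfloor T/h\rfloor}\mathbf 1_{\{x_k=0\}} .
\]
The whole matter therefore reduces to the \emph{vanishing of the discrete occupation time}:
\[
\mathbb E\Bigl[h\sum_{k\le T/h}\mathbf 1_{\{x_k=0\}}\Bigr]\to0 .
\]
Once this holds, the identification argument of \Cref{thm:EK_convergence} goes through verbatim. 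The ingredients are the Skorokhod representation, the continuity and boundedness of $\varphi$ and $\mathcal G\varphi$, and bounded convergence in \eqref{eq:discrete_mg_identity}, followed by the monotone class argument.

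The occupation-time estimate is the main step, and we obtain it from the decomposition used in the proof of Lemma~\ref{lem:tightness-nonsticky}. There we have $K^h_n=\overline\sigma\sqrt h\sum_{k<n}\mathbf 1_{\{x_k=0\}}$ and $x_n=Y^h_n+K^h_n$. Moreover \eqref{eq:nonsticky-discrete-reflection} with $n=0$ gives $K^h_m\le 2\max_{j\le m}|Y^h_j-Y^h_0|+\overline\sigma\sqrt h$. Taking $m=\lfloor T/h\rfloor$, the bound \eqref{eq:nonsticky-Y-bound} with $\nu_h=0$ and $N_hh\approx T$ yields $\mathbb E[K^h_{\lfloor T/h\rfloor}]\le C(T+\sqrt T)+C\sqrt h$. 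Dividing by $\overline\sigma\sqrt h$ shows that the expected number of vertex visits up to time $T$ is $O(h^{-1/2})$. Multiplying by $h$, the occupation time is $O(\sqrt h)\to0$, which is exactly what is needed.

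A subtle point is that $\mathcal G\varphi$ is continuous on $\Gamma$, including at $O$, because $\varphi\in D(\mathcal G)$. So no discontinuity issue arises in the continuous-mapping step, and the limit of $\int\mathcal G\varphi(\widehat X^h_u)\,du$ is $\int\mathcal G\varphi(X_u)\,du$. We therefore never need to show that the limit spends zero time at $O$; that property follows a posteriori from uniqueness. The only care required is to check that the constants in \eqref{eq:nonsticky-Y-bound} are uniform when the estimate is applied from time $0$, which is immediate since the bound is independent of $\nu_h$.
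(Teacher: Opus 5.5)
Your proposal is correct and follows essentially the same route as the paper: tightness from Lemma~\ref{lem:tightness-nonsticky}, interior consistency of order $\sqrt h$, and a uniformly bounded vertex discrepancy thanks to the Kirchhoff condition; then vanishing of the discrete occupation time $\mathcal O_T^h=\frac{\sqrt h}{\overline\sigma}K^h_{\lfloor T/h\rfloor}$ via the discrete reflection estimate and the bound on $Y^h$, after which the identification argument of \Cref{thm:EK_convergence} is reused. The only cosmetic difference is that you bound $K^h_{\lfloor T/h\rfloor}$ in $L^1$ (giving $\mathbb E[\mathcal O_T^h]=O(\sqrt h)$), whereas the paper bounds its second moment and obtains $\mathbb E[|\mathcal O_T^h|^2]\le C_T h$.
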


\begin{proof}
	Since the maximal jump size of $\widehat X^h$ is bounded by
	$\|\sigma\|_\infty\sqrt h$, every weak limit has continuous paths.

	Let $\phi\in D(\mathcal G)$, with
	$\phi^\iota\in C_b^3([0,+\infty))$ for every
	$\iota\in\mathcal I$. For the discrete chain,
	\[
	M_n^{h,\phi}
	:=
	\phi(X_n^h)-\phi(X_0^h)
	-
	h\sum_{k=0}^{n-1}\mathcal G_h\phi(X_k^h)
	\]
	is a martingale. We show that its compensator can be replaced, up to
	an error vanishing in probability, by the continuous compensator
	associated with $\mathcal G$.
	
	We first recall that the discrete occupation time of the vertex is
	\[
	\mathcal O_T^h
	:=
	h\sum_{k=0}^{\lfloor T/h\rfloor-1}
	\mathbf 1_{\{X_k^h=O\}}.
	\]
	With the notation introduced in the proof of
	Lemma~\ref{lem:tightness-nonsticky},
	\[
	\mathcal O_T^h
	=
	\frac{\sqrt h}{\overline{\sigma}}
	K_{\lfloor T/h\rfloor}^h,
	\qquad
	\overline{\sigma}
	:=
	\sum_{\iota\in\mathcal I}
	\gamma^\iota\sigma^\iota.
	\]
	The discrete reflection estimate and the drift-martingale bounds
	proved there imply
	\[
	\sup_{0<h<h_0}
	\mathbb E\left[
	\left|K_{\lfloor T/h\rfloor}^h\right|^2
	\right]
	\leq C_T.
	\]
	Consequently,
	\begin{equation}
		\mathbb E\left[
		|\mathcal O_T^h|^2
		\right]
		\leq C_T h,
		\label{eq:vanishing-discrete-occupation}
	\end{equation}
	and therefore $\mathcal O_T^h\longrightarrow0$ in $L^2$.
		
	We now estimate the error between the discrete and continuous
	generators. At the interior grid points, the consistency estimate
	already proved gives
	\begin{equation}
		\sup_{\substack{y\in\Gamma_h\\y\neq O}}
		|\mathcal G_h\phi(y)-\mathcal G\phi(y)|
		\leq C\sqrt h.
		\label{eq:nonsticky-interior-consistency}
	\end{equation}
	At the vertex, by \eqref{eq:nonsticky-discrete-generator}, although $\mathcal G_h\phi(O)$ need not converge to $\mathcal G\phi(O)$, its
	difference from $\mathcal G\phi(O)$ remains uniformly bounded:
	\begin{equation}
		|\mathcal G_h\phi(O)-\mathcal G\phi(O)|\leq C.
		\label{eq:bounded-vertex-error}
	\end{equation}
	Combining \eqref{eq:nonsticky-interior-consistency} and
	\eqref{eq:bounded-vertex-error}, for every $t\in[0,T]$ we obtain
	\begin{equation}
		h\sum_{k=0}^{\lfloor t/h\rfloor-1}
		\left|
		\mathcal G_h\phi(X_k^h)-\mathcal G\phi(X_k^h)
		\right|
		 \leq
		CT\sqrt h
		+
		C h\sum_{k=0}^{\lfloor T/h\rfloor-1}
		\mathbf 1_{\{X_k^h=O\}}=
		CT\sqrt h+C\mathcal O_T^h.	
	\end{equation}
	By \eqref{eq:vanishing-discrete-occupation}, the right-hand side
	converges to zero in probability. Thus,
	\begin{equation}
		\sup_{0\leq t\leq T}
		\left|
		h\sum_{k=0}^{\lfloor t/h\rfloor-1}
		\mathcal G_h\phi(X_k^h)
		-
		\int_0^t\mathcal G\phi(\widehat X^h(s))\,ds
		\right|
		\longrightarrow0
		\label{eq:integrated-consistency-nonsticky}
	\end{equation}
	in probability, where the additional error due to the last
	incomplete time interval is bounded by $h\|\mathcal G\phi\|_\infty$. From this point onwards, the proof proceeds as in Theorem \ref{thm:EK_convergence}.

\end{proof}

\section{Semi-Lagrangian Scheme for the HJB equation}\label{sec:HJB_equation}
We now introduce the stochastic control problem whose dynamic programming
equation will be approximated below. Let $A$ be a compact metric space,
let $\lambda>0$ be the discount factor, and let $\theta\in\mathbb R$
denote the running cost incurred at the vertex. On each edge
$\Gamma^\iota$, the drift and the running cost are now controlled
coefficients
\[
b^\iota=b^\iota(x,a),
\qquad
f^\iota=f^\iota(x,a),
\qquad
(x,a)\in[0,+\infty)\times A.
\]
We assume that $b^\iota$ and $f^\iota$ are continuous and bounded, and
 Lipschitz continuous in $x$, uniformly with
respect to $a\in A$. The diffusion coefficients $\sigma^\iota$, the
stickiness parameter $\eta$, and the redistribution weights
$\gamma^\iota$ are not controlled and satisfy the assumptions introduced
in \Cref{sec:prelim}.

Let
\[
PC(\Gamma;A)
:=
\left\{
a:\Gamma\to A:
a^\iota\in C((0,+\infty);A)
\text{ and }
\lim_{x\downarrow0}a^\iota(x)
\text{ exists in }A
\text{ for every }\iota\in\mathcal I
\right\}.
\]
We denote by
\[
\mathcal A:=PC(\Gamma;A)
\]
the set of stationary feedback controls.
For $a\in\mathcal A$, let
$X^a(t)=(\iota(t),x(t))$ be the controlled sticky diffusion generated by $(\mathcal{G}_a, D(\mathcal{G}_a))$, where
\[
D(\mathcal G_a)=
\left\{
\varphi \in C^2(\Gamma)\cap C_0(\Gamma):
\mathcal G_a \varphi \in C_0(\Gamma),\quad
\eta \mathcal G_a\varphi (O)
=
\sum_{\iota\in\mathcal I}
\gamma^\iota\sigma^\iota \partial_x \varphi^\iota(0)
\right\}
\]
where, for $\iota\in\mathcal{I}$, the generator $\mathcal G_a$ is given by
\[
\mathcal G^\iota_a f(x)=\frac12(\sigma^\iota)^2 \partial_{xx} f^\iota(x)+b^\iota(x,a^\iota(x))\partial_x f^\iota(x),
\qquad x>0.
\]
We recall from \cite{fs,BC1} that there exists a continuous and nondecreasing process $L^a$, with $L^a(0)=0$ and increasing
only when $X^a(t)=O$, such that the pair $(X^a,L^a)$ is a weak solution to
\begin{equation}
	\label{eq:controlled-sticky-diffusion}
	\left\{
	\begin{aligned}
	&	dx(t)
		=
		b^{\iota(t)}
		\bigl(x(t),a(X^a(t))\bigr)
		\mathbf 1_{\{X^a(t)\neq O\}}\,dt
		+
		\sigma^{\iota(t)}
		\mathbf 1_{\{X^a(t)\neq O\}}\,dW_t
		+\biggl(\sum_{\kappa\in\mathcal I}
	\gamma^\kappa\sigma^\kappa\biggr)dL^a(t),
		\\[4pt]
	&	\int_0^t
		\mathbf 1_{\{X^a(s)=O\}}\,ds
		=
		\eta L^a(t).
	\end{aligned}
	\right.
\end{equation}

The edge-label process obeys the redistribution
rule introduced in \Cref{sec:prelim}: whenever a new excursion starts
from the vertex,
\[
\mathbb P\bigl(
\iota(\tau^+)=\iota
\mid X^a(\tau)=O
\bigr)
=
\frac{\gamma^\iota\sigma^\iota}
{\sum_{\kappa\in\mathcal I}
	\gamma^\kappa\sigma^\kappa}.
\]
In particular, the control acts only on the drift along the open edges
and does not affect either the sticky mechanism or the redistribution
at the vertex.

For an initial state $x\in\Gamma$ and a feedback control
$a\in\mathcal A$, define the discounted cost
\[
J(x,a)
:=
\mathbb E_x
\left[
\int_0^{+\infty}
e^{-\lambda s}
\left(
f^{\iota(s)}
\bigl(x(s),a(X^a(s))\bigr)
\mathbf 1_{\{X^a(s)\neq O\}}
+
\theta\mathbf 1_{\{X^a(s)=O\}}
\right)ds
\right],
\]
and the value function
\[
u(x):=\inf_{a\in\mathcal A}J(x,a).
\]

From \cite{adlt,BC2}, we know that the corresponding Hamiltonian and Hamilton-Jacobi-Bellman equation are
\[
H^\iota(x,p)
:=
\sup_{a\in A}
\left\{
-b^\iota(x,a)p-f^\iota(x,a)
\right\}.
\]
and
\begin{equation}
	\label{eq:HJB_control}
	\begin{cases}
		\displaystyle
		-\frac12(\sigma^\iota)^2\partial_{xx}u^\iota
		+H^\iota(x,\partial_xu^\iota)
		+\lambda u^\iota=0,
		& x>0,\quad \iota\in\mathcal I,
		\\[5pt]
		u^\iota(0)=u^\kappa(0)=:u(O),
		& \iota,\kappa\in\mathcal I,
		\\[5pt]
		\displaystyle
		\sum_{\iota\in\mathcal I}
		\gamma^\iota\sigma^\iota\partial_xu^\iota(0)
		=
		\eta\bigl(\lambda u(O) - \theta \bigr).
	\end{cases}
\end{equation}

For the viscosity formulation, define
\begin{align}
F^\iota(x,r,p,X)
&:=
-\frac12(\sigma^\iota)^2 X+H^\iota(x,p)+\lambda r,
\qquad x\ge0,
\\
K(r,p)
&:=
-\sum_{\iota\in\mathcal I}\gamma^\iota\sigma^\iota p^\iota
+\eta(\lambda r-\theta),
\qquad p=(p^\iota)_{\iota\in\mathcal I}.
\end{align}
For $x\in\Gamma$, $r\in\mathbb R$, and
$p=(p^\iota)_{\iota\in\mathcal I}$,
$X=(X^\iota)_{\iota\in\mathcal I}$, set
\[
F^*(x,r,p,X)
:=
\begin{cases}
F^\iota(s,r,p^\iota,X^\iota),
& x=(\iota,s),\ s>0,\\[3pt]
\displaystyle
\max\left\{
K(r,p),\max_{\iota\in\mathcal I}F^\iota(0,r,p^\iota,X^\iota)
\right\},
& x=O,
\end{cases}
\]
and
\[
F_*(x,r,p,X)
:=
\begin{cases}
F^\iota(s,r,p^\iota,X^\iota),
& x=(\iota,s),\ s>0,\\[3pt]
\displaystyle
\min\left\{
K(r,p),\min_{\iota\in\mathcal I}F^\iota(0,r,p^\iota,X^\iota)
\right\},
& x=O.
\end{cases}
\]
At the vertex all derivatives are understood as one-sided derivatives along
the corresponding edge.

\begin{defi}\label{sub-super-sol}
A function $u\in USC(\Gamma)$ is a viscosity subsolution of
\eqref{eq:HJB_control} if, for every test function $\varphi\in C^2(\Gamma)$
and every local maximum point $x_0$ of $u-\varphi$, the following conditions
hold.

If $x_0=(\iota,s_0)$ with $s_0>0$, then
\[
F^\iota\bigl(s_0,u(x_0),\partial_x\varphi^\iota(s_0),
\partial_{xx}\varphi^\iota(s_0)\bigr)\le0.
\]
If $x_0=O$, then
\[
F_*\Bigl(O,u(O),
(\partial_x\varphi^\iota(0))_{\iota\in\mathcal I},
(\partial_{xx}\varphi^\iota(0))_{\iota\in\mathcal I}\Bigr)\le0.
\]

A function $u\in LSC(\Gamma)$ is a viscosity supersolution of
\eqref{eq:HJB_control} if, for every $\varphi\in C^2(\Gamma)$ and every local
minimum point $x_0$ of $u-\varphi$, the reverse inequalities hold, with
$F_*$ replaced by $F^*$ at the vertex. A function $u\in C(\Gamma)$ is a
viscosity solution if it is both a subsolution and a supersolution.
\end{defi}
The following comparison principle follows by a straightforward adaptation to the unbounded case of the corresponding result in \cite{BLT}.
\begin{thm}\label{thm:comparison}
Let $u\in USC(\Gamma)$ be a bounded viscosity subsolution of
\eqref{eq:HJB_control}, and let $v\in LSC(\Gamma)$ be a bounded viscosity
supersolution of \eqref{eq:HJB_control}. Then $u\le v$ on $\Gamma$. Hence the
bounded viscosity solution is unique.
\end{thm}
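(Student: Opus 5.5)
We argue by contradiction, using a doubling-of-variables argument adapted to the star graph, following the structure of \cite{BLT} but with a localization to handle the unboundedness of the edges. Assume $M:=\sup_\Gamma(u-v)>0$.

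\textbf{Step 1: localization at infinity.} We first replace $u$ by $u_\beta(x):=u(x)-\beta\psi(x)$, where $\psi(\iota,x)=\sqrt{1+x^2}$ on every edge, and $\beta>0$ is small. Since $\psi\in C^2(\Gamma)$ has bounded first and second derivatives, $u_\beta$ is a viscosity subsolution of a perturbed problem: the edge equation holds with an error $C\beta$, because $H^\iota$ is Lipschitz in $p$ (the drift $b$ is bounded) and $\partial_{xx}\psi$ is bounded. At the vertex we have $\partial_x\psi^\iota(0)=0$, so $K$ is perturbed only through $\eta\lambda r$, which works in our favour. Since $u,v$ are bounded, $u_\beta-v\to-\infty$ at infinity, so $\sup(u_\beta-v)$ is attained and is at least $M/2$ for $\beta$ small.

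\textbf{Step 2: interior maxima.} We then double variables on a single edge, setting $\Phi_\varepsilon(x,y)=u_\beta(x)-v(y)-d(x,y)^2/(2\varepsilon)$. If the maximum points concentrate at an interior point $(\iota,s_0)$ with $s_0>0$, the standard Crandall--Ishii lemma in one dimension applies. The Lipschitz continuity of $H^\iota$ in $x$, uniform in $a$, controls the term $H^\iota(x_\varepsilon,p)-H^\iota(y_\varepsilon,p)$ by $C|x_\varepsilon-y_\varepsilon|(1+|p|)$, and $|x_\varepsilon-y_\varepsilon|^2/\varepsilon\to0$. Since $\sigma^\iota$ is constant, the second-order terms cancel by the matrix inequality. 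The $\lambda$ term then gives $\lambda M/2\le C\beta+o(1)$, which is a contradiction.

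\textbf{Step 3: the vertex, which is the main obstacle.} The delicate case is when the maximum is at $O$. There the doubled test function $d^2/\varepsilon$ is not $C^2$ across edges, and the relaxed conditions $F_*$ and $F^*$ allow either the edge equation or the Kirchhoff/sticky condition to hold. Following \cite{BLT}, we would first reduce to the case where neither $u$ nor $v$ satisfies the edge equation at $O$ in the relaxed sense. For this we use the test functions $\varphi+\delta\chi$ with $\chi^\iota(x)=\pm x$ on selected edges: adding a large slope of the appropriate sign makes the Kirchhoff term $K$ strictly favourable, which forces the edge inequalities. Alternatively, one can use the known fact that the subsolution inequality at $O$ can be tested with functions having arbitrary edge slopes, so that the $F_*$ alternative reduces to the junction condition $K\le0$ or to an edge inequality obtained by the one-sided Crandall--Ishii argument.

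Next we take the test function $\sum_\iota$ of one-sided quadratics $(x-y)^2/(2\varepsilon)$ on a common edge, and perform the vertex comparison by choosing slopes $p^\iota$ that are equal for the sub- and supersolution. With equal slopes, $K(u(O),p)\le0\le K(v(O),p)$ gives $\eta\lambda(u(O)-v(O))\le0$ when $\eta>0$. When $\eta=0$, a strict slope perturbation $\delta\sum_\iota x^\iota$ has to be used in $\varphi$ so that the Kirchhoff conditions become incompatible, and the contradiction then comes from the edge equations. Handling the mixed alternatives in $F_*$ and $F^*$, as in \cite{BLT}, is where most of the work lies. Finally we let $\varepsilon\to0$ and then $\beta\to0$, which gives $u\le v$, and uniqueness follows.
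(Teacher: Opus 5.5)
Your proposal follows the same route as the paper, which gives no self-contained proof and only asserts that the comparison result of \cite{BLT} extends to the unbounded star graph by a straightforward adaptation. Your Step~1 penalization by $\beta\sqrt{1+x^2}$ is a correct way to carry out that adaptation: it has bounded derivatives and zero slope at $O$, so it costs only $C\beta$ in the edge alternatives and helps in $K$ through $\eta\lambda\beta$. As in the paper, the vertex analysis genuinely rests on \cite{BLT}. Your own sketch of it (equal slopes at $O$, the slope perturbation for $\eta=0$) only treats the configuration where both doubling maximizers sit at $O$. It does not handle the cases where one maximizer is at $O$ and the other inside an edge, or where the two lie on different edges, so it should be read as a pointer to that reference rather than as an argument in its own right.
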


\subsection{The edge-adapted approximation scheme}
We approximate \eqref{eq:HJB_control} by adapting the edge-based Markov
chain introduced in \Cref{sec:euler-Mar} to the controlled drift
$b^\iota(x,a)$. The edge-adapted grid and the vertex transition
mechanism remain unchanged.  All bounds and restrictions on the discretization parameter
are understood uniformly with respect to $a\in A$.
For $h\in(0,h_0)$ define
\begin{equation}
p_\pm^\iota(x,a)
:=
\frac12\left(1\pm\frac{\sqrt h}{\sigma^\iota}b^\iota(x,a)\right),
\qquad x\ge0,\ a\in A.
\end{equation}
By the choice of $h_0$, $p_\pm^\iota(x,a)\in[0,1]$ and
$p_+^\iota(x,a)+p_-^\iota(x,a)=1$. The grid is
\begin{equation}
\Gamma_h
:=
\{O\}\cup
\bigcup_{\iota\in\mathcal I}
\bigl\{(\iota,j\sigma^\iota\sqrt h):\ j\in\mathbb N,\ j\ge1\bigr\}.
\end{equation}
Let $\mathcal X_h$ be the space of bounded grid functions
$w=(w_0,(w_j^\iota)_{\iota\in\mathcal I,\ j\ge1})$, where $w_0$ is the
common vertex value. We use the convention $w_0^\iota=w_0$. On each edge,
$I_h[w]$ denotes the continuous, piecewise affine interpolant defined by
\[
I_h[w]^\iota(x)
=
\frac{x_{j+1}^\iota-x}{\sigma^\iota\sqrt h}w_j^\iota
+
\frac{x-x_j^\iota}{\sigma^\iota\sqrt h}w_{j+1}^\iota,
\qquad
x\in[x_j^\iota,x_{j+1}^\iota],
\]
where $x_j^\iota:=j\sigma^\iota\sqrt h$ and $j\ge0$.

The controlled transition operator $\mathcal P_h(a):\mathcal X_h\to\mathcal X_h$
is given, for $j\ge1$, by
\begin{equation}
[\mathcal P_h(a)w]_j^\iota
:=
p_+^\iota(x_j^\iota,a)w_{j+1}^\iota
+
p_-^\iota(x_j^\iota,a)w_{j-1}^\iota,
\end{equation}
and at the vertex by
\begin{equation}
[\mathcal P_hw]_0
:=
\frac{\eta}{\eta+\sqrt h}w_0
+
\frac{\sqrt h}{\eta+\sqrt h}
\sum_{\kappa\in\mathcal I}\gamma^\kappa w_1^\kappa.
\end{equation}
The vertex transition is not controlled.

Set $\rho_h:=1-\lambda h$. The approximation scheme with time step is $h$  and the spatial mesh  $\sigma^\iota\sqrt h$ on
$\Gamma^\iota$  
reads as follows: \\
Find $u_h\in\mathcal X_h$
such that, for every $\iota\in\mathcal I$ and $j\ge1$,
\begin{align}\label{eq:SL_fullydiscrete_edge}
&u_{h,j}^\iota
=
\inf_{a\in A}\left\{
hf^\iota(x_j^\iota,a)
+
\rho_h\left[
p_+^\iota(x_j^\iota,a)u_{h,j+1}^\iota
+
p_-^\iota(x_j^\iota,a)u_{h,j-1}^\iota
\right]
\right\},\\
\label{eq:SL_fullydiscrete_vertex}
&u_{h,0}
=
\frac{\eta\theta h}{\eta+\sqrt h}
+
\rho_h\left[
\frac{\eta}{\eta+\sqrt h}u_{h,0}
+
\frac{\sqrt h}{\eta+\sqrt h}
\sum_{\kappa\in\mathcal I}\gamma^\kappa u_{h,1}^\kappa
\right].
\end{align}
Define for $j\ge1$
\begin{align}
\mathcal S_h^\iota(x_j^\iota,r,w)
:=
\lambda r
+
\sup_{a\in A}\left\{
-\frac{\rho_h}{h}
\left[
p_+^\iota(x_j^\iota,a)w_{j+1}^\iota
+
p_-^\iota(x_j^\iota,a)w_{j-1}^\iota
-r
\right]
-f^\iota(x_j^\iota,a)
\right\},
\end{align}
and
\begin{equation}\label{eq:S_vertex_hjb}
\mathcal S_h^0(r,w)
:=
-\frac{\rho_h}{\sqrt h}
\sum_{\kappa\in\mathcal I}\gamma^\kappa
(w_1^\kappa-r)
+
\eta(\lambda r-\theta)
+
\lambda\sqrt h\,r.
\end{equation}
Then \eqref{eq:SL_fullydiscrete_edge}--\eqref{eq:SL_fullydiscrete_vertex}
are equivalent to
\begin{align*}
&\mathcal S_h^\iota(x_j^\iota,u_{h,j}^\iota,u_h)=0,
\qquad \iota\in\mathcal I,
\quad j\ge1,\\
&\mathcal S_h^0(u_{h,0},u_h)=0.
\end{align*}


The proofs of the following lemmas are given in
Appendix \ref{app:proofs_hjb_scheme}.

\begin{lem}\label{lem:stability_hjb}
Assume $0<h<\min\{h_0,\lambda^{-1}\}$. Then the scheme
\eqref{eq:SL_fullydiscrete_edge}--\eqref{eq:SL_fullydiscrete_vertex} has a
unique solution $u_h\in\mathcal X_h$. Moreover,
\begin{equation}\label{eq:stability_bound_hjb}
\|u_h\|_\infty
\le
\frac{\|f\|_\infty+|\theta|}{\lambda}.
\end{equation}
\end{lem}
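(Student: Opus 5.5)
We recast the scheme \eqref{eq:SL_fullydiscrete_edge}--\eqref{eq:SL_fullydiscrete_vertex} as a fixed-point problem $u_h=T_h[u_h]$ on the Banach space $(\mathcal X_h,\|\cdot\|_\infty)$ and apply the Banach contraction principle. For $\iota\in\mathcal I$ and $j\ge1$ we set
\[
T_h[w]_j^\iota:=\inf_{a\in A}\Bigl\{hf^\iota(x_j^\iota,a)+\rho_h\bigl[p_+^\iota(x_j^\iota,a)w_{j+1}^\iota+p_-^\iota(x_j^\iota,a)w_{j-1}^\iota\bigr]\Bigr\},
\]
with the convention $w_0^\iota=w_0$. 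At the vertex the equation is implicit in $u_{h,0}$, so we first solve it for $u_{h,0}$. Write $\alpha:=\eta/(\eta+\sqrt h)\in[0,1)$ and $\beta:=\sqrt h/(\eta+\sqrt h)=1-\alpha$. Since $\rho_h\alpha<1$, we can define
\[
T_h[w]_0:=\frac{1}{1-\rho_h\alpha}\Bigl[\alpha\theta h+\rho_h\beta\sum_{\kappa\in\mathcal I}\gamma^\kappa w_1^\kappa\Bigr].
\]
This makes \eqref{eq:SL_fullydiscrete_vertex} equivalent to $u_{h,0}=T_h[u_h]_0$. Because $f$ is bounded, $T_h$ maps bounded grid functions to bounded grid functions.

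The next step is to check that $T_h$ is monotone and a strict contraction. Here we use $0<h<h_0$, which gives $p_\pm^\iota\in[0,1]$ with $p_+^\iota+p_-^\iota=1$, and $h<\lambda^{-1}$, which gives $\rho_h\in(0,1)$.

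On the edges the bound follows from the standard estimate $|\inf_a F_a-\inf_a G_a|\le\sup_a|F_a-G_a|$, applied to the two infima:
\[
|T_h[w]_j^\iota-T_h[v]_j^\iota|\le\rho_h\|w-v\|_\infty .
\]
At the vertex we use $\sum_\kappa\gamma^\kappa=1$ to get
\[
|T_h[w]_0-T_h[v]_0|\le\frac{\rho_h\beta}{1-\rho_h\alpha}\|w-v\|_\infty .
\]
This is the one delicate point, since we need the coefficient to be strictly less than $1$ uniformly in the grid. It holds because $1-\rho_h\alpha-\rho_h\beta=1-\rho_h>0$. Hence $T_h$ is a contraction with constant $\max\{\rho_h,\rho_h\beta/(1-\rho_h\alpha)\}<1$, and existence and uniqueness follow.

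For the bound \eqref{eq:stability_bound_hjb}, set $M:=(\|f\|_\infty+|\theta|)/\lambda$. We show that $T_h$ maps the closed ball $\{\|w\|_\infty\le M\}$ into itself; the unique fixed point then lies in this ball.

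On the edges, $|T_h[w]_j^\iota|\le h\|f\|_\infty+\rho_hM$. This is at most $M$ because $h\|f\|_\infty\le \lambda hM$.

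At the vertex,
\[
|T_h[w]_0|\le\frac{\alpha|\theta|h+\rho_h\beta M}{1-\rho_h\alpha}.
\]
This is at most $M$ if and only if $\alpha|\theta|h\le(1-\rho_h)M=\lambda hM$, that is, $\alpha|\theta|\le\lambda M$. That inequality is true since $\alpha\le1$ and $\lambda M\ge|\theta|$. This concludes the argument.
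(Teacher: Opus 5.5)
Your proof is correct and follows the paper's argument: a Banach contraction on $(\mathcal X_h,\|\cdot\|_\infty)$, followed by a sup-norm estimate. The differences are cosmetic. The paper does not solve the vertex equation for $u_{h,0}$, because the unsolved right-hand side already contracts with factor $\rho_h$ (the vertex coefficients $\rho_h\frac{\eta}{\eta+\sqrt h}$ and $\rho_h\frac{\sqrt h}{\eta+\sqrt h}$ sum to $\rho_h$). It also obtains the bound directly from $\|u_h\|_\infty\le h(\|f\|_\infty+|\theta|)+\rho_h\|u_h\|_\infty$ rather than through invariance of the ball of radius $(\|f\|_\infty+|\theta|)/\lambda$.
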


\begin{lem}\label{lem:monotonicity_hjb}
Assume $0<h<\min\{h_0,\lambda^{-1}\}$. If $v,w\in\mathcal X_h$ and
$v\ge w$ on $\Gamma_h$, then
\[
\mathcal S_h^\iota(x_j^\iota,r,v)
\le
\mathcal S_h^\iota(x_j^\iota,r,w)
\]
for all $\iota\in\mathcal I$, $j\ge1$, and $r\in\mathbb R$, and
\[
\mathcal S_h^0(r,v)\le\mathcal S_h^0(r,w)
\]
for all $r\in\mathbb R$.
\end{lem}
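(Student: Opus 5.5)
The claim is monotonicity of the scheme operators. I would verify it by direct inspection of the two formulas, edge and vertex, using only that the coefficients multiplying the neighboring values $w_{j\pm1}^\iota$ and $w_1^\kappa$ are nonnegative. In both operators these values enter with a minus sign. The scheme value $r$ is kept fixed and only the grid function in the neighboring slots changes.

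For the edge operator, fix $\iota\in\mathcal I$, $j\ge1$, $r\in\mathbb R$ and $a\in A$. Since $0<h<h_0$, the definition of $h_0$ (uniformly in $a$) gives $p_\pm^\iota(x_j^\iota,a)\in[0,1]$. Since $h<\lambda^{-1}$, we have $\rho_h=1-\lambda h>0$. Hence, if $v\ge w$ on $\Gamma_h$,
\[
-\frac{\rho_h}{h}\Bigl[p_+^\iota(x_j^\iota,a)v_{j+1}^\iota+p_-^\iota(x_j^\iota,a)v_{j-1}^\iota-r\Bigr]-f^\iota(x_j^\iota,a)
\le
-\frac{\rho_h}{h}\Bigl[p_+^\iota(x_j^\iota,a)w_{j+1}^\iota+p_-^\iota(x_j^\iota,a)w_{j-1}^\iota-r\Bigr]-f^\iota(x_j^\iota,a).
\]
For $j=1$ this uses the convention $v_0^\iota=v_0\ge w_0=w_0^\iota$, which holds because $O\in\Gamma_h$. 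Both sides are bounded uniformly in $a$, since $v,w\in\mathcal X_h$ are bounded and $f$ is bounded. So I take the supremum over $a\in A$ on both sides; the pointwise inequality is preserved. Adding $\lambda r$ to both sides gives $\mathcal S_h^\iota(x_j^\iota,r,v)\le\mathcal S_h^\iota(x_j^\iota,r,w)$.

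For the vertex operator \eqref{eq:S_vertex_hjb}, the terms $\eta(\lambda r-\theta)$ and $\lambda\sqrt h\,r$ do not depend on the grid function. The remaining term is $-\frac{\rho_h}{\sqrt h}\sum_\kappa\gamma^\kappa(w_1^\kappa-r)$. Its coefficients $\rho_h\gamma^\kappa/\sqrt h$ are positive because $\gamma^\kappa>0$ and $\rho_h>0$. Therefore $v_1^\kappa\ge w_1^\kappa$ gives $\mathcal S_h^0(r,v)\le\mathcal S_h^0(r,w)$.

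No step is a real obstacle. The only points needing care are the sign of $\rho_h$, which is exactly where the hypothesis $h<\lambda^{-1}$ enters, and the justification that the supremum over $a$ is finite and order-preserving. The latter follows from the uniform boundedness of $b$ and $f$ in $a$ together with the boundedness of elements of $\mathcal X_h$.
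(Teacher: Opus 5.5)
Your proof is correct and follows essentially the same route as the paper: for each fixed $a\in A$ you compare pointwise using $p_\pm^\iota\ge0$ and $\rho_h>0$, then take the supremum, and at the vertex you observe that $-\frac{\rho_h}{\sqrt h}\sum_{\kappa}\gamma^\kappa(v_1^\kappa-w_1^\kappa)\le0$. The finiteness of the supremum is not actually needed for the order to be preserved, but noting it does no harm.
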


For $\varphi\in C^2(\Gamma)$, denote by $\varphi_h\in\mathcal X_h$ its
restriction to the grid:
\[
(\varphi_h)_0:=\varphi(O),
\qquad
(\varphi_h)_j^\iota:=\varphi^\iota(x_j^\iota),
\quad j\ge1.
\]
If $y\in\Gamma_h$, define
\[
\mathcal S_h(y,r,w)
:=
\begin{cases}
\mathcal S_h^\iota(x_j^\iota,r,w),
& y=(\iota,x_j^\iota),\ j\ge1,\\[2pt]
\mathcal S_h^0(r,w),
& y=O.
\end{cases}
\]

\begin{lem}\label{lem:consistency_hjb}
Let $\varphi\in C^2(\Gamma)$. Then, for every $x\in\Gamma$,
\begin{equation}\label{eq:consistency_hjb_sub}
\limsup_{\substack{h\to0,\ \xi\to0\\ y\in\Gamma_h,\ y\to x}}
\mathcal S_h\bigl(y,\varphi(y)+\xi,\varphi_h+\xi\bigr)
\le
F^*\bigl(x,\varphi(x),D\varphi(x),D^2\varphi(x)\bigr),
\end{equation}
and
\begin{equation}\label{eq:consistency_hjb_super}
\liminf_{\substack{h\to0,\ \xi\to0\\ y\in\Gamma_h,\ y\to x}}
\mathcal S_h\bigl(y,\varphi(y)+\xi,\varphi_h+\xi\bigr)
\ge
F_*\bigl(x,\varphi(x),D\varphi(x),D^2\varphi(x)\bigr).
\end{equation}
At an interior point $x=(\iota,s)$, $s>0$, the symbols $D\varphi(x)$ and
$D^2\varphi(x)$ mean the corresponding edge derivatives; at $O$ they mean the
vectors of all outgoing first and second derivatives.
\end{lem}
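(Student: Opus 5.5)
We split according to whether $x$ is an interior point or the vertex, and in each case Taylor expand the scheme applied to $\varphi_h+\xi$. A first remark disposes of the constant $\xi$. In $\mathcal S_h^\iota$, since $p_+^\iota+p_-^\iota=1$, the bracket $p_+w_{j+1}+p_-w_{j-1}-r$ is unchanged when $w$ and $r$ are both shifted by $\xi$. In $\mathcal S_h^0$ the differences $w_1^\kappa-r$ are likewise unchanged. Hence $\xi$ enters only through the terms $\lambda(\varphi(y)+\xi)$, $\eta\lambda(\varphi(O)+\xi)$ and $\lambda\sqrt h(\varphi(O)+\xi)$, all of which are continuous in $\xi$, so we may set $\xi=0$ up to an $o(1)$ error.

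\textbf{Interior points.} Let $x=(\iota,s)$ with $s>0$. For $h$ small and $y\to x$, the point $y=(\iota,x_j^\iota)$ lies on the same edge with $x_j^\iota\ge s/2$. A second-order Taylor expansion with the modulus of continuity $\omega$ of $\partial_{xx}\varphi^\iota$ on a compact neighbourhood of $s$ gives
\[
\frac{p_+^\iota\varphi^\iota(x_j^\iota+\sigma^\iota\sqrt h)+p_-^\iota\varphi^\iota(x_j^\iota-\sigma^\iota\sqrt h)-\varphi^\iota(x_j^\iota)}{h}
=b^\iota(x_j^\iota,a)\partial_x\varphi^\iota(x_j^\iota)+\tfrac12(\sigma^\iota)^2\partial_{xx}\varphi^\iota(x_j^\iota)+o(1),
\]
uniformly in $a\in A$, because $b$ is bounded and the first-order part is exact. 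Since $\rho_h=1-\lambda h$, replacing $\rho_h$ by $1$ costs $O(h)$. Taking the supremum over $a$ gives
\[
\mathcal S_h^\iota=F^\iota\bigl(x_j^\iota,\varphi(y),\partial_x\varphi^\iota,\partial_{xx}\varphi^\iota\bigr)+o(1).
\]
The Lipschitz continuity of $b$ and $f$ in $x$, uniformly in $a$, makes $H^\iota$ continuous, so the right-hand side converges to $F^\iota(s,\dots)$. This gives both inequalities with equality.

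\textbf{Vertex, $y=O$.} Since $\varphi^\kappa(\sigma^\kappa\sqrt h)-\varphi(O)=\sigma^\kappa\sqrt h\,\partial_x\varphi^\kappa(0)+o(\sqrt h)$, we get
\[
\mathcal S_h^0\bigl(\varphi(O)+\xi,\varphi_h+\xi\bigr)\to-\sum_\kappa\gamma^\kappa\sigma^\kappa\partial_x\varphi^\kappa(0)+\eta\bigl(\lambda\varphi(O)-\theta\bigr)=K\bigl(\varphi(O),D\varphi(O)\bigr).
\]
The limit $K$ lies between $F_*$ and $F^*$ at $O$.

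\textbf{Vertex, $y=(\iota,x_j^\iota)\to O$ with $j\ge1$.} This is the step needing care. The $j=1$ neighbour $w_0=\varphi(O)$ is the common vertex value, so the same Taylor expansion on the closed half-line $[0,\infty)$ is valid, with $\varphi^\iota\in C^2([0,\infty))$ and one-sided derivatives at $0$. Hence $\mathcal S_h^\iota(x_j^\iota,\dots)=F^\iota(0,\varphi(O),\partial_x\varphi^\iota(0),\partial_{xx}\varphi^\iota(0))+o(1)$. Each such limit again lies between $\min_\iota F^\iota(0,\cdot)$ and $\max_\iota F^\iota(0,\cdot)$.

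Putting the two vertex cases together, every subsequential limit of $\mathcal S_h(y,\varphi(y)+\xi,\varphi_h+\xi)$ as $y\to O$ belongs to $\{K\}\cup\{F^\iota(0,\cdot)\}_{\iota\in\mathcal I}$. It is therefore bounded above by $F^*$ and below by $F_*$, which proves \eqref{eq:consistency_hjb_sub} and \eqref{eq:consistency_hjb_super}.

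The main technical point is making all $o(1)$ errors uniform in $a$ and in $y$ near $x$. This follows from the boundedness of $b$ and $f$, from the uniform continuity of $\partial_{xx}\varphi^\iota$ on compact subsets of $[0,\infty)$, and from the continuity of $H^\iota$ in $x$ up to $0$, which holds because $b$ and $f$ are Lipschitz in $x$ uniformly in $a$.
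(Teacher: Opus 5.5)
Your proposal is correct and follows essentially the same route as the paper. You use the same three-way split: an interior point; the vertex with $y=O$, which gives $K$; and the vertex approached along a fixed edge, where the lower neighbour is at worst the vertex value, so the one-sided Taylor expansion applies and gives $F^\iota(0,\cdot)$. You then bound every subsequential limit above by $F^*$ and below by $F_*$, exactly as the paper does, and your remarks on the cancellation of $\xi$, the $O(h)$ cost of $\rho_h$, uniformity in $a$, and continuity of $H^\iota$ make explicit details the paper leaves implicit.
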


\begin{thm}
Let $u$ be the unique bounded viscosity solution of \eqref{eq:HJB_control}.
For every $h\in(0,\min\{h_0,\lambda^{-1}\})$, let $u_h\in\mathcal X_h$ be
the unique solution of
\eqref{eq:SL_fullydiscrete_edge}--\eqref{eq:SL_fullydiscrete_vertex}. Then
\[
I_h[u_h]\to u
\qquad\text{locally uniformly on }\Gamma
\]
as $h\to0$.
\end{thm}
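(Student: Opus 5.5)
The plan is to use the Barles--Souganidis half-relaxed limits method, adapted to the network. The three ingredients are already available: stability (\Cref{lem:stability_hjb}), monotonicity (\Cref{lem:monotonicity_hjb}), consistency (\Cref{lem:consistency_hjb}), together with the comparison principle \Cref{thm:comparison}.

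\textbf{Setting up the half-relaxed limits.} For $x\in\Gamma$ define
\[
\overline u(x):=\limsup_{\substack{h\to0\\ y\in\Gamma_h,\ y\to x}}u_h(y),
\qquad
\underline u(x):=\liminf_{\substack{h\to0\\ y\in\Gamma_h,\ y\to x}}u_h(y).
\]
By \eqref{eq:stability_bound_hjb} both are bounded by $(\|f\|_\infty+|\theta|)/\lambda$. Moreover $\overline u\in USC(\Gamma)$ and $\underline u\in LSC(\Gamma)$, and $\underline u\le\overline u$.

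\textbf{Subsolution property of $\overline u$.} Take $\varphi\in C^2(\Gamma)$ and a local maximum $x_0$ of $\overline u-\varphi$. Replacing $\varphi$ by $\varphi+d(\cdot,x_0)^4$, we may assume the maximum is strict on a closed ball $\bar B$. Near $O$ this perturbation is still $C^2$ on each edge with vanishing first and second derivatives at $x_0$, so it does not change the quantities tested. Since $\Gamma$ is locally compact, the standard argument gives points $y_h\in\Gamma_h\cap\bar B$ maximizing $u_h-\varphi_h$ over the finite set $\Gamma_h\cap\bar B$. Along a subsequence, $y_h\to x_0$ and $u_h(y_h)\to\overline u(x_0)$.

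Set $\xi_h:=u_h(y_h)-\varphi(y_h)\to\overline u(x_0)-\varphi(x_0)$. Then $u_h\le\varphi_h+\xi_h$ on the grid points of $\bar B$. The scheme only involves nearest neighbours, at distance at most $\|\sigma\|_\infty\sqrt h$, so for $h$ small every neighbour of $y_h$ lies in $\bar B$. Hence monotonicity needs only this local inequality: the scheme operators at $y_h$ depend on $w$ only through neighbouring values, and I will make this explicit by noting that \Cref{lem:monotonicity_hjb} holds pointwise in the entries used.

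Using the scheme equation $\mathcal S_h(y_h,u_h(y_h),u_h)=0$ and monotonicity, we get
\[
\mathcal S_h\bigl(y_h,\varphi(y_h)+\xi_h,\varphi_h+\xi_h\bigr)\le 0 .
\]
Applying \eqref{eq:consistency_hjb_super} with $\xi\to\overline u(x_0)-\varphi(x_0)$ gives $F_*(x_0,\overline u(x_0),D\varphi,D^2\varphi)\le0$. Strictly speaking, \Cref{lem:consistency_hjb} is stated with $\xi\to0$. I handle this by shifting, i.e. applying the lemma to $\tilde\varphi:=\varphi+\overline u(x_0)-\varphi(x_0)$ and writing $\xi_h=(\overline u(x_0)-\varphi(x_0))+o(1)$. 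This is exactly the subsolution inequality at interior points, and the $F_*$ condition at $O$, of \Cref{sub-super-sol}. The supersolution property of $\underline u$ is proved symmetrically, using \eqref{eq:consistency_hjb_sub} and $F^*$.

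\textbf{Conclusion via comparison.} \Cref{thm:comparison} applied to the bounded pair $(\overline u,\underline u)$ gives $\overline u\le\underline u$. Hence $\overline u=\underline u=u$, which is continuous, and the half-relaxed limits coincide. The standard argument then yields $\sup_{y\in\Gamma_h\cap K}|u_h(y)-u(y)|\to0$ for every compact $K$.

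Finally, I pass to the interpolant. For $x\in[x_j^\iota,x_{j+1}^\iota]$, $I_h[u_h](x)$ is a convex combination of $u_h$ at two grid points within distance $\|\sigma\|_\infty\sqrt h$ of $x$; near $O$ these points may include $O$ itself. So $|I_h[u_h](x)-u(x)|$ is bounded by the grid error plus the modulus of continuity of $u$ on a slightly larger compact, which tends to zero. This gives the locally uniform convergence.

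\textbf{Main obstacle.} I expect the delicate step to be the vertex case of the consistency limit. There $\mathcal S_h^0$ carries the singular factor $1/\sqrt h$, while the grid points $y_h$ near $O$ may lie on edges rather than at $O$. So one must check that both the interior operators evaluated at $y_h\to O$ and the vertex operator produce the correct relaxed quantity, namely the min/max involving $K$ and the $F^\iota(0,\cdot)$. This is precisely what \Cref{lem:consistency_hjb} encodes, so it is available here. The remaining care is in the localization of monotonicity and in the unboundedness of $\Gamma$; the latter is harmless because the scheme is local and the solutions are uniformly bounded.
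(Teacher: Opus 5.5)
Your proposal is correct and follows the same route as the paper. The paper's proof uses half-relaxed limits, the Barles--Souganidis argument based on \Cref{lem:monotonicity_hjb,lem:consistency_hjb}, comparison via \Cref{thm:comparison}, and then passes to the interpolant using the mesh size $\sigma^\iota\sqrt h$ and uniform continuity of $u$. Your version pairs the consistency inequalities correctly: \eqref{eq:consistency_hjb_super} gives the $F_*$ subsolution inequality and \eqref{eq:consistency_hjb_sub} gives the $F^*$ supersolution inequality. You also spell out details the paper leaves implicit: the strict-maximum localization, the locality of the nearest-neighbour scheme (so monotonicity only needs $u_h\le\varphi_h+\xi_h$ near $y_h$), and the constant shift that reduces to the $\xi\to0$ form of the consistency lemma.
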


\begin{proof}
By \Cref{lem:stability_hjb}, the family $\{u_h\}_h$ is uniformly bounded.
Define the half-relaxed limits
\[
\overline u(x):=
\limsup_{\substack{h\to0\\ y\in\Gamma_h,\ y\to x}}u_h(y),
\qquad
\underline u(x):=
\liminf_{\substack{h\to0\\ y\in\Gamma_h,\ y\to x}}u_h(y).
\]
The standard Barles--Souganidis argument \cite{BS}, based on
\Cref{lem:monotonicity_hjb,lem:consistency_hjb}, shows that $\overline u$ is
a viscosity subsolution and $\underline u$ is a viscosity supersolution of
\eqref{eq:HJB_control}. At the vertex, the relaxed consistency inequalities
produce exactly the pair $F_*$ and $F^*$ used in
Definition~\ref{sub-super-sol}.

By \Cref{thm:comparison}, $\overline u\le\underline u$ on $\Gamma$. Since the
reverse inequality follows from the definitions, $\overline u=\underline u=u$.
Therefore $u_h\to u$ locally uniformly on the grid. The local uniform
convergence of the interpolants $I_h[u_h]$ follows because the mesh size on
$\Gamma^\iota$ is $\sigma^\iota\sqrt h\to0$ and $u$ is locally uniformly
continuous on $\Gamma$.
\end{proof}
\section{Numerical experiments}
We present a few numerical tests illustrating the approximation
results obtained in the previous sections. Our purpose is to emphasize
the main qualitative and structural properties of the Markov-chain
approximation and of the associated HJB scheme, rather than to
perform a systematic error analysis.
\subsection{Simulation of the approximating chain}
\label{subsec:numerical-chain}

We illustrate some structural properties of the Markov-chain
approximation on a star graph with three edges. We take
\[
b^\iota\equiv0,
\qquad
(\sigma^1,\sigma^2,\sigma^3)=(0.8,1,1.4),
\qquad
(\gamma^1,\gamma^2,\gamma^3)=(0.25,0.45,0.30).
\]
All simulations start from the vertex. The choice of a vanishing drift
allows us to compare the numerical results directly with the
calibration properties discussed in
Remark~\ref{rem:calibration}.

We first compare sample paths of the radial component in the
non-sticky and sticky cases. Figure~\ref{fig:chain-sample-paths}
shows two realizations obtained with $h=2^{-10}$. When $\eta=0$, each
visit to the vertex is followed by immediate re-emission. For
$\eta>0$, the geometric holding mechanism produces visible intervals
during which the chain remains at the vertex.

\begin{figure}[t]
	\centering
	\includegraphics[width=0.76\textwidth]
	{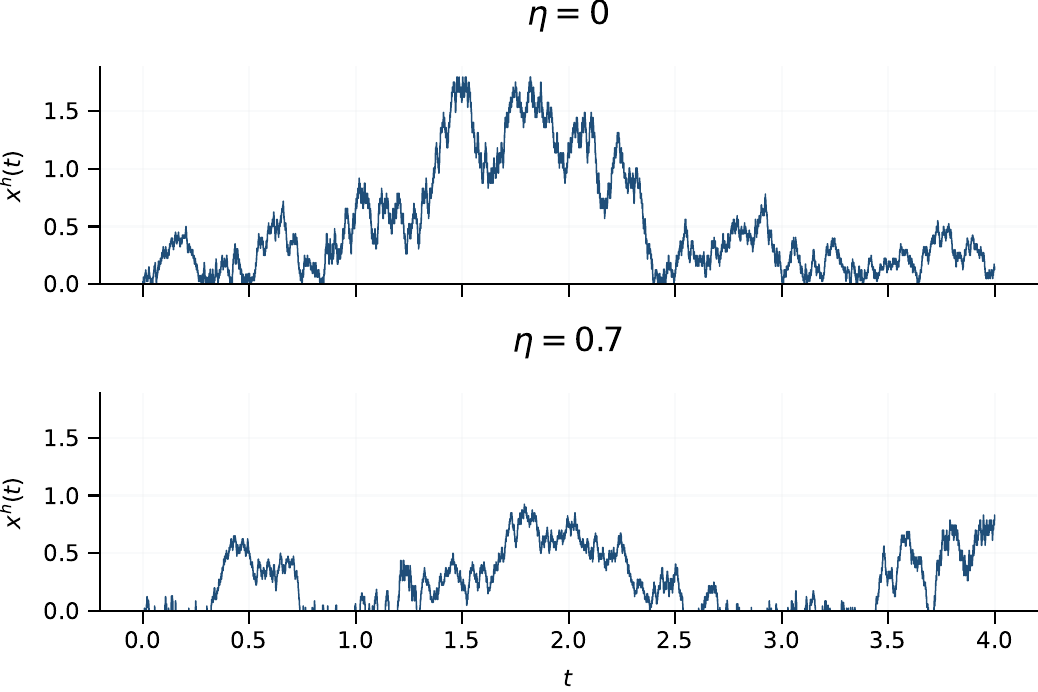}
	\caption{
		Sample paths of the radial component of the approximating
		chain for $\eta=0$ and $\eta=0.35$, with $h=2^{-10}$.
		The horizontal portions at zero correspond to discrete
		residence times at the vertex.
	}
	\label{fig:chain-sample-paths}
\end{figure}

We next consider the discrete occupation time
\[
\mathcal O_T^h
:=
h\sum_{n=0}^{\lfloor T/h\rfloor-1}
\mathbf 1_{\{X_n^h=O\}}.
\]
The left panel of Figure~\ref{fig:chain-structural-tests} reports its
empirical mean at $T=3$ for different values of $\eta$. Each value is
estimated using $3000$ independent trajectories, and the vertical
bars represent $95\%$ confidence intervals. In the non-sticky case,
the occupation time decreases to zero as the grid is refined. A
logarithmic regression gives the empirical behavior
\[
\mathbb E[\mathcal O_T^h]\simeq C h^{0.499},
\]
which is consistent with the $O(\sqrt h)$ estimate suggested by the
discrete reflection decomposition. For $\eta>0$, the occupation time
remains positive and increases with the stickiness parameter.

Finally, we test the redistribution mechanism. Starting from $O$, we
record the edge along which the chain first exits the ball
$B_\rho(O)$, with $\rho=0.8$. In the drift-free case, the limiting
exit probabilities are
\[
q^\iota
=
\frac{\gamma^\iota\sigma^\iota}
{\sum_{\kappa\in\mathcal I}
	\gamma^\kappa\sigma^\kappa},
\qquad \iota\in\mathcal I.
\]
For the parameters above,
\[
(q^1,q^2,q^3)
\simeq
(0.187,0.421,0.393).
\]
The right panel of Figure~\ref{fig:chain-structural-tests} compares
these values with the empirical frequencies obtained from $10^4$
independent exits. The dashed lines denote the theoretical
probabilities. The empirical frequencies remain close to their
predicted values for all the considered discretization parameters.

\begin{figure}[t]
	\centering
	\includegraphics[width=\textwidth]
	{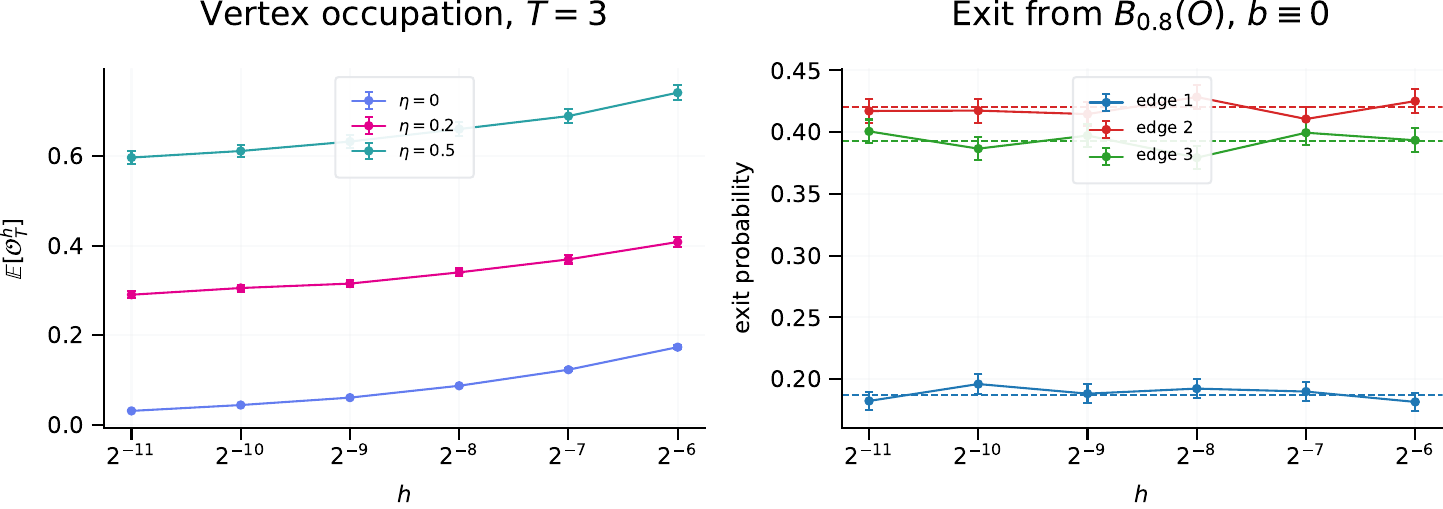}
	\caption{
		Left: empirical mean of the discrete occupation time at the
		vertex for different values of $\eta$. Right: empirical
		probabilities of exiting $B_{0.8}(O)$ through each edge; the
		dashed lines represent the limiting probabilities
		$q^\iota$.
	}
	\label{fig:chain-structural-tests}
\end{figure}

These experiments illustrate the two main effects encoded by the
vertex transition mechanism: the approximation of the sticky
occupation time and the correct redistribution of excursions among
the incident edges. They also support the vanishing occupation-time
property used in the convergence analysis of the non-sticky case.

\subsection{Numerical approximation of the value function}
\label{subsec:numerical-hjb}

We finally illustrate the behavior of the HJB scheme on the
three-edge network considered above. We take
\[
A=[-1,1],
\qquad
b^\iota(x,a)=a,
\qquad
f^\iota(x,a)
=
c^\iota e^{-x}+\frac{\nu}{2}a^2,
\]
where
\[
(c^1,c^2,c^3)=(1,0.7,1.3),
\qquad
\nu=0.2,
\qquad
\lambda=1,
\qquad
\theta=0.5,
\qquad
\eta=0.5.
\]
The control acts on the drift along the edges, while the diffusion
coefficients and the vertex transition mechanism are the same as in
the previous experiment.

The computation is performed on the truncated network
$\Gamma_R$, with $R=8$, imposing the artificial boundary condition
\[
u^\iota(R)=0,
\qquad \iota\in\mathcal I.
\]
The discrete HJB equation is solved by policy iteration. Since no
explicit solution is available, we compute a reference solution
$u_{\mathrm{ref}}$ using $h_{\mathrm{ref}}=2^{-14}$. For the coarser
discretizations, the error is evaluated on the interior sub-network
$\Gamma_R$ according to
\[
E_h
:=
\max_{\substack{\iota\in\mathcal I\\x_j^\iota\leq 6}}
\left|
u_{h,j}^\iota
-
I_{h_{\mathrm{ref}}}
[u_{\mathrm{ref}}]^\iota(x_j^\iota)
\right|.
\]

Figure~\ref{fig:numerical-value-function} shows the reference
value-function profiles on the three edges and the decay of $E_h$
under grid refinement. The different profiles reflect the
edge-dependent running costs and diffusion coefficients. The errors
decrease regularly as $h\to0$; a logarithmic fit gives an empirical
slope of approximately $0.64$. This value is reported only as a
numerical observation and is not intended as a theoretical
convergence rate.

\begin{figure}[t]
	\centering
	\includegraphics[width=\textwidth]
	{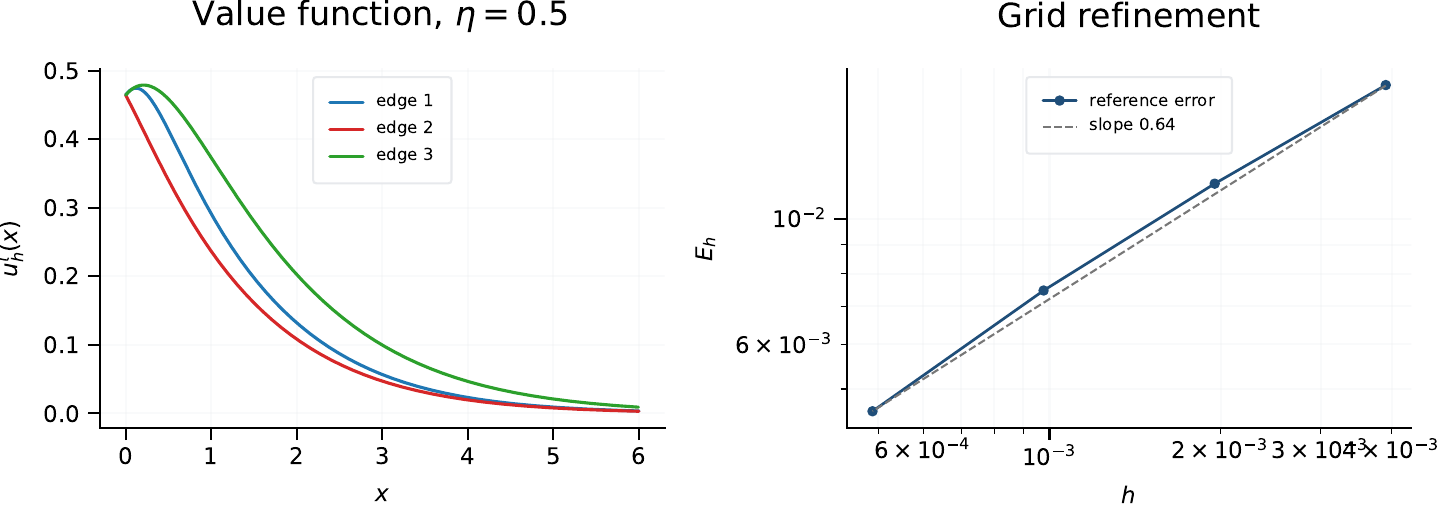}
	\caption{
		Left: numerical value function on the three edges for
		$\eta=0.5$, computed with $h_{\mathrm{ref}}=2^{-14}$.
		Right: error $E_h$ with respect to the reference solution
		on the interior sub-network $\Gamma\cap \{x^\iota_j\le 6\}$. The dashed line
		represents the empirical slope $0.64$.
	}
	\label{fig:numerical-value-function}
\end{figure}
We next investigate the influence of the stickiness parameter,
keeping all the other data unchanged. The discrete HJB equation is
solved with $h=2^{-13}$ for $\eta\in\{0,1,5,20\}$. To keep the comparison readable, the left panel of
Figure~\ref{fig:value-function-large-eta} displays the corresponding
profiles on the first edge, whereas the right panel reports the
common vertex value $u_h(O)$ for $\eta\in[0,20]$. The effect of
stickiness is most visible near the vertex and gradually decreases
along the edge.

For the selected data, the vertex value increases from approximately
$0.455$ in the non-sticky case to $0.496$ for $\eta=20$. Moreover,
the numerical values approach
\[
\frac{\theta}{\lambda}=0.5
\]
as $\eta$ increases. This behavior is consistent with the vertex
condition
\[
-\sum_{\iota\in\mathcal I}
\gamma^\iota\sigma^\iota\partial_xu^\iota(0)
=
\eta\bigl(\theta-\lambda u(O)\bigr),
\]
which formally suggests that
$u(O)\to\theta/\lambda$ in the large-stickiness regime, provided that
the outgoing derivatives remain bounded.

\begin{figure}[t]
	\centering
	\includegraphics[width=\textwidth]
	{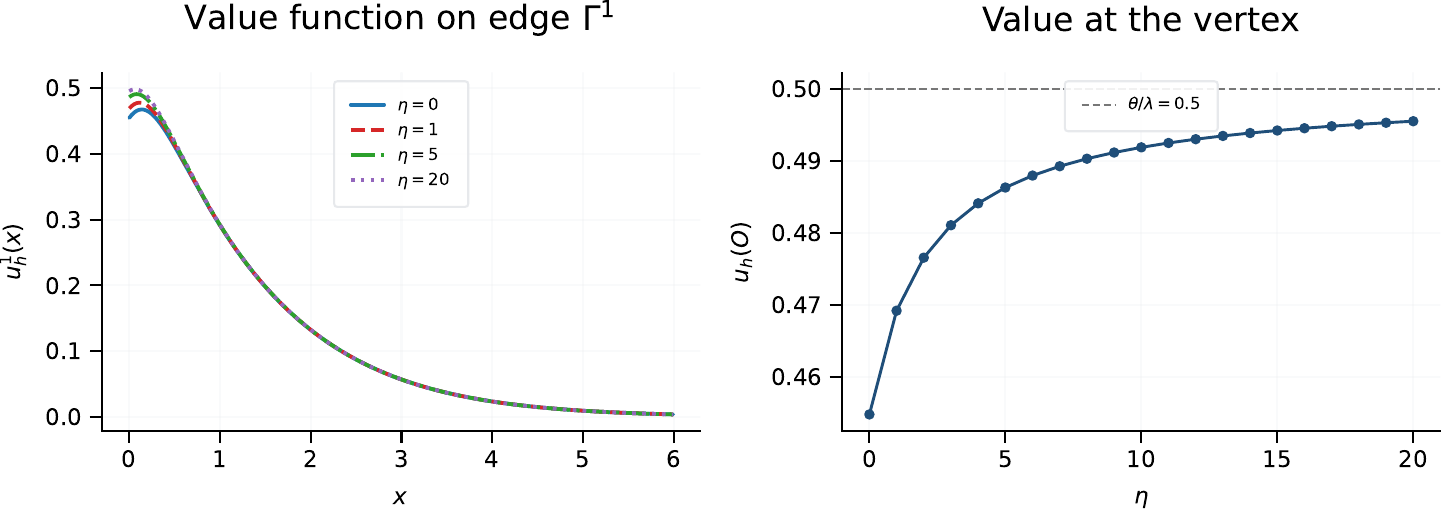}
	\caption{
		Left: numerical value function on the first edge for
		$\eta=0$, $1$, $5$, and $20$. Right: the vertex value
		$u_h(O)$ as a function of $\eta$; the dashed line represents
		$\theta/\lambda=0.5$. The computations use $h=2^{-13}$ on
		the truncated network $\Gamma_R$.
	}
	\label{fig:value-function-large-eta}
\end{figure}
\appendix
\section{Proofs for the convergence of the random-walk scheme}\label{app:proof_Euler}

\begin{proof}[Proof of \Cref{lem:lyapunov}]
We first prove \eqref{lyap_est}, distinguishing the cases $x>0$ and $x=0$.
Let $x>0$ and set $s:=\sigma^\iota\sqrt h$. Then, with
$p_\pm:=p^\iota_\pm(x)$ as in \eqref{eq:upwind_prob},
\[
\mathbb E_{\iota,x}[V(X_1^h)]-V(\iota,x)
=p_+\bigl[(x+s)^2-x^2\bigr]+p_-\bigl[(x-s)^2-x^2\bigr]
=(p_+-p_-)\,2xs+s^2
=2x\,b^\iota(x)\,h+(\sigma^\iota)^2h .
\]
Hence
\[
\mathcal G_hV(\iota,x)
=2x\,b^\iota(x)+(\sigma^\iota)^2
\le C(1+x^2)=CV(\iota,x),
\]
by the boundedness of $b$ and $\sigma$.
Let now $x=0$. Then
\[
\mathbb E_{O}[V(X_1^h)]-V(O)
=\frac{\sqrt h}{\eta+\sqrt h}
\sum_{\kappa\in\mathcal I}\gamma^\kappa
\bigl[V(\kappa,\sigma^\kappa\sqrt h)-V(O)\bigr]
=\frac{\sqrt h}{\eta+\sqrt h}
\sum_{\kappa\in\mathcal I}\gamma^\kappa(\sigma^\kappa)^2h
\le \|\sigma\|_\infty^2\,h ,
\]
so that $\mathcal G_hV(O)\le \|\sigma\|_\infty^2\le CV(O)$. Combining the
two estimates gives \eqref{lyap_est}, possibly increasing $C$.
The proof of \eqref{compact} from \eqref{lyap_est} is unchanged with
respect to the supermartingale argument: from
$\mathbb E[V(X_{n+1}^h)\mid X_n^h]\le(1+Ch)V(X_n^h)$, the process
$Y_n^h:=(1+Ch)^{-n}V(X_n^h)$ is a nonnegative supermartingale; setting
$K_R:=\{O\}\cup\{(\iota,x):\iota\in\mathcal I,\ 0\le x\le R\}$, Doob's
maximal inequality gives
\[
\mathbb P\left(
\max_{0\le n\le \lfloor T/h\rfloor}V(X_n^h)>1+R^2
\right)
\le
\frac{(1+Ch)^{\lfloor T/h\rfloor}}{1+R^2}\,V(X_0^h)
\le \frac{e^{CT}}{1+R^2}\,\sup_{0<h<h_0}V(X^h_0),
\]
where the last supremum is finite since $X^h_0\to x$. Choosing $R$ large
enough yields \eqref{compact}.
\end{proof}
\begin{proof}[Proof of \Cref{lem:tightness}]
By \Cref{lem:lyapunov}, the compact containment condition \eqref{compact}
holds. Let $(\tau_h)_h$ be stopping times bounded by $T$ and let
$\theta_h\to0$. We prove that
\[
d\bigl(\widehat X^h(\tau_h+\theta_h),\widehat X^h(\tau_h)\bigr)
\to 0
\quad\text{in probability}.
\]

Set $\Delta_n:=x_{n+1}-x_n$ and let $(\mathcal F_n)_{n\ge0}$ be the natural
filtration of the chain. Define
\[
a_n:=\mathbb E[\Delta_n\mid\mathcal F_n],
\qquad
\xi_{n+1}:=\Delta_n-a_n .
\]
By \eqref{eq:local_consistency}, $|a_n|\le \|b\|_\infty h$ on $\{x_n>0\}$,
while on $\{x_n=0\}$,
\[
|a_n|
=
\mathbb P(R_n=1)\sum_{\iota}\gamma^\iota\sigma^\iota\sqrt h
=
\frac{h}{\eta+\sqrt h}\sum_{\iota}\gamma^\iota\sigma^\iota
\le \frac{h}{\eta}\sum_{\iota}\gamma^\iota\sigma^\iota,
\]
so that, in all cases,
\[
|a_n|\le Ch
\qquad\text{for some $C > 0$ independent of $h$. }
\]
Moreover, $(\xi_n)_{n\ge1}$ are martingale
differences and
\begin{equation}\label{eq:stima_2_moment}
	\mathbb E[\xi_{n+1}^2\mid\mathcal F_n]
=
\operatorname{Var}(\Delta_n\mid\mathcal F_n)
\le \mathbb E[\Delta_n^2\mid\mathcal F_n]
\le \|\sigma\|_\infty^2\,h .
\end{equation}
Let $\nu_h:=\lfloor\tau_h/h\rfloor$ and $N_h:=\lfloor\theta_h/h\rfloor+1$. Since $\widehat X^h$ is piecewise constant and
\[
\nu_h\;\le\;\Bigl\lfloor\frac{\tau_h+\theta_h}{h}\Bigr\rfloor\;\le\;\nu_h+N_h,
\]
there exists a (random) integer $m_h\in\{0,\dots,N_h\}$ such that
\[
\widehat X^h(\tau_h+\theta_h)=X^h_{\nu_h+m_h},
\qquad
\widehat X^h(\tau_h)=X^h_{\nu_h}.
\]
Set
\[
\omega_h
:=
\max_{0\le m\le N_h}
\bigl|x_{\nu_h+m}-x_{\nu_h}\bigr|.
\]
We first relate the network distance to $\omega_h$. If the chain does not change edge between $\nu_h$ and $\nu_h+m_h$,
then
\[
d\bigl(X^h_{\nu_h+m_h},X^h_{\nu_h}\bigr)
=
|x_{\nu_h+m_h}-x_{\nu_h}|
\le \omega_h.
\]
If the chain does change edge, then the transition mechanism implies that it
must visit the vertex at some intermediate step. Hence for some
$m^\ast\in\{0,\dots,m_h\}$ one has $x_{\nu_h+m^\ast}=0$, so that
\[
\omega_h\ge |x_{\nu_h+m^\ast}-x_{\nu_h}|=x_{\nu_h}.
\]
Since also
\[
x_{\nu_h+m_h}\le x_{\nu_h}+\omega_h\le 2\omega_h,
\]
we obtain
\[
d\bigl(X^h_{\nu_h+m_h},X^h_{\nu_h}\bigr)
=
x_{\nu_h}+x_{\nu_h+m_h}
\le 3\omega_h.
\]
In both cases, therefore,
\[
d\bigl(\widehat X^h(\tau_h+\theta_h),\widehat X^h(\tau_h)\bigr)
\le 3\omega_h.
\]

Now write, for $m=0,\dots,N_h$,
\[
x_{\nu_h+m}-x_{\nu_h}
=
\sum_{k=0}^{m-1}\Delta_{\nu_h+k}
=
\sum_{k=0}^{m-1}a_{\nu_h+k}
+
\sum_{k=0}^{m-1}\xi_{\nu_h+k+1}.
\]
Hence
\[
\omega_h
\le
\sum_{k=0}^{N_h-1}|a_{\nu_h+k}|
+
\max_{0\le m\le N_h}
\left|
\sum_{k=0}^{m-1}\xi_{\nu_h+k+1}
\right|.
\]
Using $(u+v)^2\le 2u^2+2v^2$,
\[
\mathbb E[\omega_h^2]
\le
2\mathbb E\Bigl[\Bigl(\sum_{k=0}^{N_h-1}|a_{\nu_h+k}|\Bigr)^2\Bigr]
+
2\mathbb E\Bigl[
\max_{0\le m\le N_h}
\Bigl|
\sum_{k=0}^{m-1}\xi_{\nu_h+k+1}
\Bigr|^2
\Bigr].
\]
For the drift term,
\[
\sum_{k=0}^{N_h-1}|a_{\nu_h+k}|
\le CN_hh,
\]
so
\[
\mathbb E\Bigl[\Bigl(\sum_{k=0}^{N_h-1}|a_{\nu_h+k}|\Bigr)^2\Bigr]
\le C(N_hh)^2.
\]
For the martingale term, Doob's $L^2$ inequality yields
\[
\mathbb E\Bigl[
\max_{0\le m\le N_h}
\Bigl|
\sum_{k=0}^{m-1}\xi_{\nu_h+k+1}
\Bigr|^2
\Bigr]
\le
4\,
\mathbb E\Bigl[
\sum_{k=0}^{N_h-1}\xi_{\nu_h+k+1}^2
\Bigr]
\le
4C\,N_hh.
\]
Therefore
\[
\mathbb E[\omega_h^2]
\le
C\bigl((N_hh)^2+N_hh\bigr)
\le
C\bigl(\theta_h^2+\theta_h+h\bigr).
\]
Consequently,
\[
\mathbb E\Bigl[
d\bigl(\widehat X^h(\tau_h+\theta_h),\widehat X^h(\tau_h)\bigr)^2
\Bigr]
\le
9\,\mathbb E[\omega_h^2]
\le
C\bigl(\theta_h^2+\theta_h+h\bigr),
\]
which tends to $0$ as $h\to0$ because $\theta_h\to0$. By Markov's
inequality,
\[
d\bigl(\widehat X^h(\tau_h+\theta_h),\widehat X^h(\tau_h)\bigr)
\to 0
\quad\text{in probability}.
\]
Together with compact containment, this
proves Aldous' criterion \cite[Theorem 8.6-(c) p.138]{ek}, and therefore the family is tight in
$D([0,T];\Gamma)$.
\end{proof}
\begin{proof}[Proof of \Cref{lem:local-generator-estimates}]
We first prove (i). Let $(\iota,x)\in\Gamma_h$ with
$x=j\sigma^\iota\sqrt h$, $j\ge1$, and set $s:=\sigma^\iota\sqrt h$,
$p_\pm:=p^\iota_\pm(x)$. Both landing points $x\pm s$ belong to
$[0,+\infty)$ (this is the crucial point: the lower one is
$(j-1)\sigma^\iota\sqrt h\ge0$, so the dynamics is never truncated and
$\varphi^\iota$ is evaluated only where it is of class $C^3_b$;
in particular, for $j=1$ the chain lands exactly at the vertex and
$\varphi^\iota(0)=\varphi(O)$). Hence
\[
\mathcal G_h\varphi(\iota,x)
=\frac1h\Bigl[
p_+\bigl(\varphi^\iota(x+s)-\varphi^\iota(x)\bigr)
+p_-\bigl(\varphi^\iota(x-s)-\varphi^\iota(x)\bigr)
\Bigr].
\]
By Taylor's formula with Lagrange remainder,
\[
\varphi^\iota(x\pm s)-\varphi^\iota(x)
=\pm s\,\partial_x\varphi^\iota(x)
+\frac{s^2}{2}\,\partial_{xx}\varphi^\iota(x)
\pm\frac{s^3}{6}\,\partial_{xxx}\varphi^\iota(\zeta_\pm),
\qquad \zeta_\pm\in[(x-s)\vee0,\,x+s].
\]
Using $p_++p_-=1$ and
$p_+-p_-=\sqrt h\,b^\iota(x)/\sigma^\iota$, we obtain
\[
\mathcal G_h\varphi(\iota,x)
=\frac{(p_+-p_-)\,s}{h}\,\partial_x\varphi^\iota(x)
+\frac{s^2}{2h}\,\partial_{xx}\varphi^\iota(x)
+O\!\left(\frac{s^3}{h}\right)
=b^\iota(x)\,\partial_x\varphi^\iota(x)
+\frac12(\sigma^\iota)^2\,\partial_{xx}\varphi^\iota(x)
+O(\sqrt h),
\]
where the constant in $O(\sqrt h)$ depends only on
$\|\partial_{xxx}\varphi\|_\infty$ and $\|\sigma\|_\infty$. Since
$\mathcal G\varphi(\iota,x)=b^\iota(x)\partial_x\varphi^\iota(x)
+\frac12(\sigma^\iota)^2\partial_{xx}\varphi^\iota(x)$, the estimate (i) follows,
uniformly over $\Gamma_h\setminus\{O\}$.
We now prove (ii). At the vertex, by the definition of the transition
mechanism,
\[
\mathcal G_h\varphi(O)
=\frac1h\,\frac{\sqrt h}{\eta+\sqrt h}
\sum_{\iota\in\mathcal I}\gamma^\iota
\bigl(\varphi^\iota(\sigma^\iota\sqrt h)-\varphi(O)\bigr)
=\frac{1}{\sqrt h\,(\eta+\sqrt h)}
\sum_{\iota\in\mathcal I}\gamma^\iota
\bigl(\varphi^\iota(\sigma^\iota\sqrt h)-\varphi(O)\bigr).
\]
Taylor's formula at $0$ on each edge gives
\[
\varphi^\iota(\sigma^\iota\sqrt h)-\varphi(O)
=\sigma^\iota\sqrt h\,\partial_x\varphi^\iota(0)
+\frac12(\sigma^\iota)^2 h\,\partial_{xx}\varphi^\iota(0)
+O(h^{3/2}),
\]
and therefore, by the vertex condition defining $D(\mathcal G)$,
\[
\mathcal G_h\varphi(O)
=\frac{\eta\,\mathcal G\varphi(O)
+\sqrt h\sum_\iota\gamma^\iota\frac12(\sigma^\iota)^2\partial_{xx}\varphi^\iota(0)
+O(h)}
{\eta+\sqrt h} .
\]
Consequently,
\[
\mathcal G_h\varphi(O)-\mathcal G\varphi(O)
=\frac{\sqrt h\,\Bigl(\sum_\iota\gamma^\iota\frac12(\sigma^\iota)^2
\partial_{xx}\varphi^\iota(0)-\mathcal G\varphi(O)\Bigr)+O(h)}
{\eta+\sqrt h},
\]
whence
$|\mathcal G_h\varphi(O)-\mathcal G\varphi(O)|
\le C\sqrt h/(\eta+\sqrt h)\le (C/\eta)\sqrt h$.
This proves (ii), and the final uniform statement follows by combining (i)
and (ii).
\end{proof}

\section{Proofs for the fully discrete HJB scheme}\label{app:proofs_hjb_scheme}

\begin{proof}[Proof of \Cref{lem:stability_hjb}]
Define $T_h:\mathcal X_h\to\mathcal X_h$ by
\[
[T_hw]_j^\iota
:=
\inf_{a\in A}\left\{
hf^\iota(x_j^\iota,a)
+
\rho_h\left[
p_+^\iota(x_j^\iota,a)w_{j+1}^\iota
+
p_-^\iota(x_j^\iota,a)w_{j-1}^\iota
\right]
\right\},
\qquad j\ge1,
\]
and
\[
[T_hw]_0
:=
\frac{\eta\theta h}{\eta+\sqrt h}
+
\rho_h\left[
\frac{\eta}{\eta+\sqrt h}w_0
+
\frac{\sqrt h}{\eta+\sqrt h}
\sum_{\kappa\in\mathcal I}\gamma^\kappa w_1^\kappa
\right].
\]
Fixed points of $T_h$ are precisely solutions of
\eqref{eq:SL_fullydiscrete_edge}--\eqref{eq:SL_fullydiscrete_vertex}.

Let $v,w\in\mathcal X_h$. Since
\[
\left|\inf_{a\in A}\alpha_a-\inf_{a\in A}\beta_a\right|
\le
\sup_{a\in A}|\alpha_a-\beta_a|,
\]
and since all transition coefficients are nonnegative and sum to one, we have
for $j\ge1$
\[
|[T_hv]_j^\iota-[T_hw]_j^\iota|
\le
\rho_h\|v-w\|_\infty.
\]
At the vertex,
\[
|[T_hv]_0-[T_hw]_0|
\le
\rho_h\left[
\frac{\eta}{\eta+\sqrt h}|v_0-w_0|
+
\frac{\sqrt h}{\eta+\sqrt h}
\sum_{\kappa\in\mathcal I}\gamma^\kappa|v_1^\kappa-w_1^\kappa|
\right]
\le
\rho_h\|v-w\|_\infty.
\]
Thus
\[
\|T_hv-T_hw\|_\infty\le\rho_h\|v-w\|_\infty.
\]
Because $0<h<\lambda^{-1}$, $0<\rho_h<1$, and $T_h$ is a contraction on the
Banach space $\mathcal X_h$. Hence it admits a unique fixed point.

For the bound, let $u_h=T_hu_h$. From the edge equation,
\[
|u_{h,j}^\iota|
\le
h\|f\|_\infty+\rho_h\|u_h\|_\infty,
\qquad j\ge1.
\]
From the vertex equation and
$0\le \eta h/(\eta+\sqrt h)\le h$, we get
\[
|u_{h,0}|
\le
h|\theta|+\rho_h\|u_h\|_\infty.
\]
Taking the supremum gives
\[
\|u_h\|_\infty
\le
h(\|f\|_\infty+|\theta|)+\rho_h\|u_h\|_\infty.
\]
Since $1-\rho_h=\lambda h$, this proves
\eqref{eq:stability_bound_hjb}.
\end{proof}

\begin{proof}[Proof of \Cref{lem:monotonicity_hjb}]
Let $v,w\in\mathcal X_h$ with $v\ge w$.
For $j\ge1$ and every $a\in A$,
\[
p_+^\iota(x_j^\iota,a)v_{j+1}^\iota
+
p_-^\iota(x_j^\iota,a)v_{j-1}^\iota
\ge
p_+^\iota(x_j^\iota,a)w_{j+1}^\iota
+
p_-^\iota(x_j^\iota,a)w_{j-1}^\iota,
\]
because $p_\pm^\iota\ge0$. Multiplying by $-\rho_h/h\le0$ and taking the
supremum in $a$ gives
\[
\mathcal S_h^\iota(x_j^\iota,r,v)
\le
\mathcal S_h^\iota(x_j^\iota,r,w).
\]
At the vertex,
\[
\mathcal S_h^0(r,v)-\mathcal S_h^0(r,w)
=
-\frac{\rho_h}{\sqrt h}
\sum_{\kappa\in\mathcal I}\gamma^\kappa
(v_1^\kappa-w_1^\kappa)
\le0.
\]
This proves the claim.
\end{proof}

\begin{proof}[Proof of \Cref{lem:consistency_hjb}]
We prove \eqref{eq:consistency_hjb_sub}. The proof of
\eqref{eq:consistency_hjb_super} is identical, with $\limsup$ replaced by
$\liminf$ and the final maximum by the corresponding minimum.

Let $h_n\to0$, $\xi_n\to0$, and $y_n\in\Gamma_{h_n}$ with $y_n\to x$.
Set $\rho_n:=1-\lambda h_n$. Passing to subsequences if necessary, we
consider the following cases.

\medskip
\noindent Case 1: $x=(\iota,s)$ with $s>0$.
For $n$ large enough, $y_n=(\iota,y_n^\iota)$ lies on the same edge and
$y_n^\iota>0$. Moreover, if $s_n:=\sigma^\iota\sqrt{h_n}$, then
$y_n^\iota-s_n>0$ for $n$ large enough. Therefore both neighboring points
belong to the physical edge. By cancellation of the constant perturbation
$\xi_n$ in the difference term,
\begin{align*}
&\mathcal S_{h_n}\bigl(y_n,\varphi(y_n)+\xi_n,\varphi_{h_n}+\xi_n\bigr)
\\
&=
\lambda(\varphi^\iota(y_n^\iota)+\xi_n)
+
\sup_{a\in A}\left\{
-\frac{\rho_n}{h_n}
\left[
p_+^\iota(y_n^\iota,a)\varphi^\iota(y_n^\iota+s_n)
+
p_-^\iota(y_n^\iota,a)\varphi^\iota(y_n^\iota-s_n)
-
\varphi^\iota(y_n^\iota)
\right]
-f^\iota(y_n^\iota,a)
\right\}.
\end{align*}
Taylor's formula gives, uniformly in $a\in A$,
\begin{align*}
&p_+^\iota(y_n^\iota,a)\varphi^\iota(y_n^\iota+s_n)
+
p_-^\iota(y_n^\iota,a)\varphi^\iota(y_n^\iota-s_n)
-
\varphi^\iota(y_n^\iota)
\\
&=
\bigl(p_+^\iota(y_n^\iota,a)-p_-^\iota(y_n^\iota,a)\bigr)s_n
\partial_x\varphi^\iota(y_n^\iota)
+
\frac{s_n^2}{2}\partial_{xx}\varphi^\iota(y_n^\iota)
+o(h_n)
\\
&=
h_n b^\iota(y_n^\iota,a)\partial_x\varphi^\iota(y_n^\iota)
+
\frac12(\sigma^\iota)^2 h_n\partial_{xx}\varphi^\iota(y_n^\iota)
+o(h_n).
\end{align*}
Consequently,
\[
\mathcal S_{h_n}\bigl(y_n,\varphi(y_n)+\xi_n,\varphi_{h_n}+\xi_n\bigr)
=
F^\iota\bigl(y_n^\iota,\varphi^\iota(y_n^\iota),
\partial_x\varphi^\iota(y_n^\iota),
\partial_{xx}\varphi^\iota(y_n^\iota)\bigr)+o(1),
\]
and the limit is the desired interior value of $F^*$.

\medskip
\noindent Case 2: $x=O$ and $y_n=O$ for infinitely many $n$.
Along this subsequence the vertex branch applies. Since the perturbation
$\xi_n$ cancels in $w_1^\kappa-r$, we have
\begin{align*}
&\mathcal S_{h_n}\bigl(O,\varphi(O)+\xi_n,\varphi_{h_n}+\xi_n\bigr)
\\
&=
-\frac{\rho_n}{\sqrt{h_n}}
\sum_{\kappa\in\mathcal I}\gamma^\kappa
\left[
\varphi^\kappa(\sigma^\kappa\sqrt{h_n})-\varphi(O)
\right]
+
\eta(\lambda(\varphi(O)+\xi_n)-\theta)
+
\lambda\sqrt{h_n}(\varphi(O)+\xi_n).
\end{align*}
Moreover,
\[
\frac{\varphi^\kappa(\sigma^\kappa\sqrt{h_n})-\varphi(O)}{\sqrt{h_n}}
=
\sigma^\kappa\partial_x\varphi^\kappa(0)+O(\sqrt{h_n}).
\]
Therefore the limit along this subsequence is
\[
-\sum_{\kappa\in\mathcal I}\gamma^\kappa\sigma^\kappa\partial_x\varphi^\kappa(0)
+
\eta(\lambda\varphi(O)-\theta)
=
K\bigl(\varphi(O),(\partial_x\varphi^\kappa(0))_{\kappa\in\mathcal I}\bigr).
\]

\medskip
\noindent Case 3: $x=O$ and $y_n\neq O$ for infinitely many $n$.
Up to a subsequence, $y_n$ belongs to a fixed edge $\Gamma^\iota$:
$y_n=(\iota,j_n\sigma^\iota\sqrt{h_n})$ with $j_n\ge1$. Set
$s_n:=\sigma^\iota\sqrt{h_n}$. Then
$y_n^\iota-s_n=(j_n-1)\sigma^\iota\sqrt{h_n}\ge0$. Thus the lower point is
still on the physical edge, and no extension through the vertex is used.
Taylor's formula up to the boundary gives, uniformly in $a\in A$,
\begin{align*}
&p_+^\iota(y_n^\iota,a)\varphi^\iota(y_n^\iota+s_n)
+
p_-^\iota(y_n^\iota,a)\varphi^\iota(y_n^\iota-s_n)
-
\varphi^\iota(y_n^\iota)
\\
&=
h_n b^\iota(y_n^\iota,a)\partial_x\varphi^\iota(y_n^\iota)
+
\frac12(\sigma^\iota)^2 h_n\partial_{xx}\varphi^\iota(y_n^\iota)
+o(h_n).
\end{align*}
Hence
\[
\mathcal S_{h_n}\bigl(y_n,\varphi(y_n)+\xi_n,\varphi_{h_n}+\xi_n\bigr)
=
F^\iota\bigl(y_n^\iota,\varphi^\iota(y_n^\iota),
\partial_x\varphi^\iota(y_n^\iota),
\partial_{xx}\varphi^\iota(y_n^\iota)\bigr)+o(1),
\]
and the limit is
\[
F^\iota\bigl(0,\varphi(O),\partial_x\varphi^\iota(0),
\partial_{xx}\varphi^\iota(0)\bigr).
\]

Combining the three cases, every subsequential limit at $O$ is either the
vertex operator $K$ or one of the edge operators
$F^\iota(0,\cdot,\cdot,\cdot)$. Taking the upper relaxed limit gives exactly
\[
F^*\bigl(O,\varphi(O),
(\partial_x\varphi^\iota(0))_{\iota\in\mathcal I},
(\partial_{xx}\varphi^\iota(0))_{\iota\in\mathcal I}\bigr).
\]
This proves \eqref{eq:consistency_hjb_sub}. The lower relaxed inequality
\eqref{eq:consistency_hjb_super} follows in the same way, taking the minimum
over the possible vertex and edge limits.
\end{proof}


\end{document}